\documentclass[a4paper,11pt,3p]{article}
\usepackage[margin=1in]{geometry}
\usepackage{amsfonts,amsmath,amssymb,mathrsfs,amsthm,amscd,enumitem,multicol,multienum}
\usepackage{graphics,multirow}
\usepackage{algorithm}
\usepackage{tabularx}
\usepackage{chemfig}
\usepackage[version=4]{mhchem}
\usepackage{bm}
\usepackage{latexsym}
\usepackage[T1]{fontenc}
\usepackage[utf8]{inputenc}
\usepackage[colorlinks=true, linkcolor=black, anchorcolor=black, citecolor=black, filecolor=black, urlcolor=black]{hyperref}

\newtheorem{thm}{Theorem}[section]
\newtheorem{lemma}[thm]{Lemma}
\newtheorem{cor}[thm]{Corollary}
\newtheorem{prop}[thm]{Proposition}
\newtheorem{defin}[thm]{Definition}

\numberwithin{equation}{section}

\renewcommand{\epsilon}{\varepsilon}

\evensidemargin 2mm
\oddsidemargin  2mm
\textwidth  155mm
\textheight 250mm
\topmargin -1.8cm

\begin{document}
	\begin{center}
		{\large Global existence and uniform boundedness of the classical solutions for the system of multi-species transport with mass control }

	\bigskip
	
	\bigskip
\end{center}
\begin{minipage}[h]{0.45\columnwidth}
	\begin{center}
		Nibedita Ghosh\\
		{\small Department of Mathematics, \\IIT Kharagpur}\\
		{\small WB 721302, India}\\
		{\small e-mail: nghosh.iitkgp@gmail.com}
	\end{center}
\end{minipage}\hspace{1cm}
\begin{minipage}[h]{0.45\columnwidth}
	\begin{center}
		Hari Shankar Mahato\\
		{\small Department of Mathematics, \\IIT Kharagpur}\\
		{\small WB 721302, India}\\
		{\small e-mail: hsmahato@maths.iitkgp.ac.in}
	\end{center}
\end{minipage}

\bigskip
\hrule

	\bigskip
	
	\textbf{Abstract.}  The goal of this work is to establish the global existence of nonnegative classical solutions in all dimensions for a system of highly nonlinear reaction-diffusion equations. We address the case for different diffusion coefficients and the system of reversible reactions with non-homogeneous Neumann boundary conditions. The systems are assumed to satisfy only the mass control condition and to have locally Lipschitz nonlinearities with arbitrary growth. The key aspect of this work is that we didn't assume that the diffusion coefficients are close to each other. We utilize the duality method and the regularization of the heat operator to derive the result. We also illustrate the global in time bounds for the solutions. The application includes concrete corrosion in sewer pipes or sulfate corrosion in sewer pipes.
	
	\bigskip
	{\textbf{Keywords:} } reaction-diffusion systems, classical solutions, global existence, mass control, uniform boundedness. \\

	{\textbf{AMS subject classifications: }} 35A01, 35K57, 35K58, 47J35, 80A32, 82C70
	\bigskip
	\hrule
	
	\section{Introduction}
	Reaction-diffusion systems appears frequently in physical, chemical or biological models and the study of the global well-posedness for such systems is a subject of interest since last four decades, see, e.g., \cite{rothe1984global, hollis1987global, amann1985global, conway1978large, fitzgibbon1997stability, morgan1989global} and many references therein. In the present paper, we study the reaction-diffusion systems in the context of reversible chemical reactions. Our work can be motivated in the following way: We consider $\Omega\subset\mathbb{R}^n(n\ge2)$ be a bounded domain with sufficiently smooth boundary $\partial\Omega$. We study a multi-species multi-reaction model with $I\in\mathbb{N}$ number of mobile species(dissolved) $M_i$ present in the carrier substance in the domain $\Omega$ and $J\in\mathbb{N}$ number of reactions among the mobile species. We take into account that these mobile species are diffuse and react via the reversible reactions given by 
	\begin{equation}\label{eqn:re}
		\mu_{1j}M_1+\mu_{2j}M_2+\cdots\mu_{Ij}M_I\rightleftharpoons \nu_{1j}M_1+\nu_{2j}M_2+\cdots\nu_{Ij}M_I \qquad \text{ in }\quad \Omega,
	\end{equation}
	where $j=1,2,\cdots, J$. Here we assume that all the stoichiometric coefficients $\mu_{ij}, \nu_{ij}$ are non-negative integers. The vector of the mobile species concentrations is denoted by $u=(u_1, u_2,. . .,u_I)$. The mobile species are transported by diffusion only which is modeled by the Fick's law. We didn't consider any fluid velocity that is there is no advection. Therefore the flux vector is given by $\vec{j}_i=-\bar{D}_i\nabla u_i$, where $\bar{D}_i$ is the diffusive matrix. The diffusive matrix is defined by $\bar{D}_i=diag(D_i, D_i, . . ., D_i)$. Therefore the diffusive flux is $\vec{j}_i=-diag(D_i, D_i, . . ., D_i)\nabla u_i=-D_i\nabla u_i$, where $D_i$ is a positive scalar and different for $I$ number of mobile species. The reaction amongst the $I$ number of mobile species is modeled by mass action kinetics. Hence the $j-$th reaction rate term is given by 
	\begin{align*}
		R_j(u)=R_j^f(u)-R_j^b(u),
	\end{align*}
	where
	\begin{align*}
		R_j^f(u)=\text{ forward reaction rate term }=k_j^f\prod_{m=1}^{I}(u_m)^{\mu_{mj}}
	\end{align*}
	and 
	\begin{align*}
		R_j^b(u)=\text{ backward reaction rate term }=k_j^b\prod_{m=1}^{I}(u_m)^{\nu_{mj}}
	\end{align*}
	where $k_j^f$ and $k_j^b$ are the forward reaction rate constant and backward reaction rate constant, respectively. Therefore the reaction rate term for the $i-$th mobile species is represented by
	\begin{align*}
		(SR(u))_i=\sum_{j=1}^{J}s_{ij}R_j(u)=\sum_{j=1}^{J}s_{ij}\left(k_j^f\prod_{m=1}^{I}(u_m)^{\mu_{mj}}-k_j^b\prod_{m=1}^{I}(u_m)^{\nu_{mj}}\right),
	\end{align*} 
	where $S=(s_{ij})_{1\le i\le I, 1\le j\le J}$ is the stoichiometric matrix with entries $s_{ij}=\nu_{ij}-\mu_{ij}$. We set $S:=[0,T)$ as the time interval for a $T>0$, then our diffusion-reaction model for $I$ number of mobile species is given by
	\begin{subequations}
		\begin{align}
			\frac{\partial u_i}{\partial {t}} + \nabla.(-{D_i}\nabla u_i)&=f_i(u) \; \text{ in } S\times \Omega,	\label{eqn:M11}\\
			-{D_i}\nabla u_i.\vec{n}&=b_i \; \text{ on } S \times \partial\Omega, \label{eqn:M12}\\
			u_i(0,x)&=u_i^0(x) \; \text{ in } \Omega,\label{eqn:M13} 
		\end{align}	
	\end{subequations}
	where the source term 
	\begin{align}\label{eqn:st}
		f_i(u)=(SR(u))_i=
		\sum_{j=1}^{J}s_{ij}\left(k_j^f\prod_{m=1}^{I}(u_m)^{\mu_{mj}}-k_j^b\prod_{m=1}^{I}(u_m)^{\nu_{mj}}\right).
	\end{align}
	We represent the system of equations $\eqref{eqn:M11}-\eqref{eqn:M13}$ by $(\mathcal{P})$.
	
	If we choose $\lambda=\max\{\sum_{m=1}^{I}\mu_{mj}, \sum_{m=1}^{I}\nu_{mj}\}$, we can control the growth of the source term. We consider $u_1^2, u_2^2, . . ., u_I^2$ together with their respective weights $\mu_{1j}, \mu_{2j}, . . ., \mu_{Ij}$ and $\nu_{1j}, \nu_{2j}, . . ., \nu_{Ij}$, respectively. Then using weighted G.M. $\le$ weighted A.M.,
	 we arrive at
	\begin{align*}
		\prod_{m=1}^{I}(u_m)^{\mu_{mj}}\le|u|^{\sum_{m=1}^{I}\mu_{mj}} \text{ and } \prod_{m=1}^{I}(u_m)^{\nu_{mj}}\le|u|^{\sum_{m=1}^{I}\nu_{mj}}.
	\end{align*}
	Eventually, we can deduce that
	\begin{align}\label{eqn:gc}
		|f_i(u)|\le C|u|^{\lambda}, \text{ for all } i=1,2, . . ., I,
	\end{align}
	where $C$ is a constant.
	
	A significant application of this model is the reactive transport of species in the groundwater flow but setting the porosity to unity. We consider the reversible reaction rates of the mass action type. Most of the hydrogeochemical models \cite{fang2003general, friedly1992solute, krautle2007reduction, lichtner1985continuum} contains mass action kinetics and hence plays a crucial role in computational hydrogeochemistry. We do not restrict the scope of the model by imposing any restrictions on the number of mobile species, the number of chemical reactions, the dimension of the space, the value of the stoichoimetric coefficients or the degree of the non-linearity of the reaction rates.

	Our main focus is to establish the existence of global-in-time classical solutions together with the uniform boundedness of the solutions. Although the question of global existence for reaction-diffusion systems is a classical topic but still poses a lot of open and challenging issues. The main difficulty in proving the existence and boundedness of the solutions of the system lies in the finding of a-priori $L^p-$estimates.  We will study the problem $(\mathcal{P})$ under the following assumptions:
	\begin{itemize}
		\item[\bf (A1.)] Let $\Omega\subset\mathbb{R}^n$ is a bounded domain with smooth boundary $\partial\Omega$. Initial data $u_i^0\in L^\infty(\Omega)$ and $u_i^0(x)\ge 0$ together with the diffusion coefficient $D_i>0$ is a constant, for all $i=1, 2, . . ., I$.
		\item[\bf (A2.)] $b_i\in L^p(S\times\partial\Omega)$ and $b_i\le0$, for all $i=1, 2, . . ., I$.
		\item[\bf (A3.)]  $f_i: \mathbb{R}^I\rightarrow \mathbb{R}$ is Locally Lipschitz continuous function for all $i=1, 2, . . ., I$.
		\item[\bf (A4.)] (\textbf{Mass control}) $\sum_{i=1}^{I}f_i(u)\le C_1\sum_{i=1}^{I} u_i+C_2$ for all $u\in \mathbb{R}^I_{+}$, for some constants $C_1, C_2>0$.
	\end{itemize}
	Here we only consider the natural assumption of mass control. Mass control is basically a generalization to the condition of mass conservation
	\begin{align}\label{eqn:mc}
		\sum_{i=1}^{I}f_i(u)=0
	\end{align} 
	and to the condition of mass dissipation
	\begin{align}\label{eqn:md}
		\sum_{i=1}^{I}f_i(u)\le0.
	\end{align}
	The nonnegativity of the initial condition is taken to prove the nonnegativity of the solutions $u_i$. The locally Lipschitz assumption \textbf{(A3.)} gives the local existence of the classical solutions to the system on $[0, T_{\max}),$ see \cite{amann1985global} for details. The local existence results have long been well known and basically they form the basis of the most global existence arguments. The global existence of this local solutions is really demanding for the case of different diffusion coefficients. As for the case of equal diffusion coefficients i.e. $D_i=D$, summing up all the equations \eqref{eqn:M11} for $i=1, 2, . . ., I$ leads to
	\begin{align*}
		\frac{\partial}{\partial t}(\sum_{i=1}^{I}u_i)-D\Delta (\sum_{i=1}^{I}u_i)\le C_1\sum_{i=1}^{I} u_i+C_2.
	\end{align*} 
	Relying on \textbf{(A2.)} and the maximum principle, we get the estimate
	\begin{align*}
	\left\lVert u_i(t)\right\rVert_{L^\infty(\Omega)}\le	\left\lVert \sum_{i=1}^{I}u_i(t)\right\rVert_{L^\infty(\Omega)}\le C e^T.
	\end{align*}
	This gives the global existence of the solutions and application of the bootstrap argument will give the uniform in time boundedness of the solutions. However the situation changes dramatically when the diffusion coefficients are different and \cite{pierre2010global} gives an extensive survey on this topic. The main difficulty is the lack of maximal principle for general systems to yield the $L^\infty$- a-priori estimates.

	\subsection{Literature Survey}
	We wish to put the present work in context by drawing a distinction between what is already available in the literature and what we have done new. To begin with, we first review some existing results in the literature. The work of Pierre and Schmidt \cite{pierre2000blowup} pointed out that only mass control and positivity of the solutions is not sufficient to prevent the blow up and therefore we need a growth control condition. In this aspect we deduce \eqref{eqn:gc} to control the growth of the nonlinearities. Several works have been done in this direction of establishing the weak solutions for the system $(\mathcal{P})$ with homogeneous Neumann boundary condition. \cite{pierre2003weak} proved the existence of weak solutions for the problem  $(\mathcal{P})$ under the assumption that the nonlinearities belongs to $L^1(S\times\Omega)$. For the case of quadratic nonlinearities, existence of weak solutions is proved in \cite{desvillettes2007global} by using the duality method. The global existence of weak solutions for the case of equal diffusion coefficients and inhomogeneous Neumann boundary conditions can be found in \cite{krautle2011existence} for the solutions space $[H^{1, p}(S; L^p(\Omega))\cap L^p(S; H^{2, p}(\Omega))]^I$ and $p>n+1$. The same result for a less regular solutions space $[H^{1, p}(S; H^{1, q}(\Omega)^{*})\cap L^p(S; H^{2, p}(\Omega))]^I$ and $p>n+2$ is done in \cite{mahato2013global}. 
	
	The authors in \cite{fischer2015global, pierre2017asymptotic} proved the existence of global renormalized solutions if the nonlinearities satisfy the entropy condition
	\begin{align}\label{eqn:ei}
		\sum_{i=1}^{I}f_i(u)(\log u_i+\alpha_i)\le 0 \text{ for all } u\in (0, \infty)^I,
	\end{align}
	for some $\alpha_1, \alpha_2, . . ., \alpha_I\in\mathbb{R}$. Although the renormalized solutions does not ensure the $L^1$ integrability of the nonlinear reaction rate terms. However, it coincides with the unique classical solutions as long as the classical solutions exists \cite{fischer2017weak}. 
	
	Now coming to the conditional global existence of classical solutions which is an active research direction nowadays. Global classical solutions for the system ($\mathcal{P}$) with homogeneous Neumann boundary condition  under the assumption \eqref{eqn:mc} and \eqref{eqn:ei} is proved in \cite{goudon2010regularity} if dimension $n=1$ and $f_i(u)$ have cubic growth and for $n=2$, $f_i(u)$ have quadratic growth. The result later improved by \cite{tang2018global} by utilizing a modified Gagliardo-Nirenbarg inequality. Further, the dimension restriction was removed in the work \cite{caputo2009global} and existence of global classical solutions is proved for strictly sub-quadratic growth. We refer to \cite{canizo2014improved} where the global existence under mass conservation for two dimensional domain with quadratic growth is proved and the boundedness in time is shown in \cite{pierre2017asymptotic}. Further, for higher order nonlinearities in any space dimension global existence of classical solutions is proved under the quasi-uniform diffusion condition, i.e. the diffusion coefficients are close enough to each other. Similar work can be found in \cite{fellner2016global, cupps2021uniform}. However, \cite{canizo2014improved} shown that the $L^\infty$-norm of the classical solutions is growing at most polynomially and later on this is removed in \cite{cupps2021uniform}. The authors in \cite{pierre2017dissipative} proved the global existence and uniform boundedness for quadratic growth and for dimension $n=2$ by relaxing the mass conservation assumption \eqref{eqn:mc} to mass dissipation \eqref{eqn:md}. This result is improved in \cite{morgan2020boundedness} by replacing the mass dissipation assumption by a weaker intermediate sum condition. In higher dimensions, the global existence of classical solutions with quadratic nonlinearities has been proved in \cite{caputo2019solutions,fellner2020global,souplet2018global} and for the case of $\Omega=\mathbb{R}^n$ is deduced in \cite{kanel1990solvability}. The work \cite{caputo2019solutions} is done based on mass conservation \eqref{eqn:mc} together with the entropy condition \eqref{eqn:ei}, where as \cite{souplet2018global} relaxed the mass conservation condition to mass dissipation \eqref{eqn:md}. The more general work is done in \cite{fellner2020global} under the mass control assumption. The uniform in time bound for the solutions is shown in \cite{fellner2021uniform}.
	
	We can see in the aforementioned works that the global existence of classical solution in any space dimensions and for the higher order nonlinearities is proved under the restriction that the diffusion coefficients are close to each other. Along with that most of them does not talk about uniform in time bounds of the solution. Therefore the main motivation of this work comes from relaxing the quasi-uniform diffusion condition assumption together with working with a rather weaker natural mass control condition. We also taken care of inhomogeneous Neumann boundary conditions along with a less regular solution space upto the boundary. We basically modified the work \cite{cupps2021uniform} suitably and introduce a new choice of common diffusion coefficients to derive the a-priori estimates. Our work only needs the restriction that the maximal regularity constant should be less than $1$, which we also shown that a feasible assumption. 
	
	The structure of this paper as follows: In Section 2, we collect all mathematical tools and Lemmas require to analyze the system in Section 3 and state the main result of the paper. Section 3 is devoted to the proof of the main theorem. First we start with the positivity of the concentrations then we collect all a-priori estimates and finally prove the existence, uniqueness and global in time bound of the solution. 
	
     \section{Mathematical Preliminaries}
    Let $1<p<\infty$, we define $p^{'}$ as the Holder conjugate exponent of $p$ such that $\frac{1}{p}+\frac{1}{p^{'}}=1$. For $1\le p\le\infty, L^p(\Omega)$ is the space of real-valued measurable functions $u(x)$ such that $|u(.)|^p$ is Lebesgue measurable with the corresponding norm given by 
    \begin{align*}
    	\|u\|_{p,\Omega}=\left(\int_{\Omega}|u(x)|^pdxdt\right)^{\frac{1}{p}}\text{ for } 1<p<\infty
    \end{align*}
   and 
   \begin{align*}
   	\|u\|_{\infty,\Omega}=\underset{x \in \Omega}{\text{ess sup}}{|u(x)|} \text{ for } p=\infty.
   \end{align*}
    Similarly, the Bochner space $L^p(\tau, T; L^p(\Omega))$ is denoted by $L^p((\tau, T)\times\Omega)$ together with the norms
    \begin{align*}
    	\|u\|_{p, (\tau, T)\times\Omega}=\begin{cases}
    		\left(\int_{\tau}^{T}\int_{\Omega}|u(t,x)|^pdxdt\right)^{\frac{1}{p}} \text{ for } 1\le p<\infty\\
    		\underset{(\tau, T)\times\Omega}{\text{ess sup}}{|u(t, x)|} \text{ for } p=\infty.
    	\end{cases}
    \end{align*}
    The space $C(\Omega)$ and $C^k(\bar{\Omega})$ denotes the Banach space of all continuous functions and the Banach space of all $k-$times continuously differentiable functions w.r.t the norms
    \begin{align*}
    	\|u\|_{C(\Omega)}=\underset{x\in\Omega}{\sup} |u(x)|\text{ and } \|u\|_{C^k(\bar{\Omega})}=\sum_{|\alpha|\le k}\underset{x\in \bar{\Omega}}{\sup} |D^\alpha u(x)|.
    \end{align*} 
  Suppose $0<\gamma\le 1$, $C^\gamma(\bar{\Omega})$ consists of all functions $u\in C(\bar{\Omega})$ such that
  \begin{align*}
  	\|u\|_{C^\gamma(\bar{\Omega})}=\|u\|_{C(\bar{\Omega})}+\underset{\underset{x\neq y}{x,y\in\Omega} }{\sup}\left\{\frac{|u(x)-u(y)|}{|x-y|^\gamma}\right\}<\infty.
  \end{align*}
  The space $H^{1,p}(\Omega)$ is the usual sobolev space w.r.t the the norm
  \begin{align*}
  	\|u\|_{H^{1, p}}=\begin{cases}
  		\left(\|u\|^p_{p,\Omega}+\|\nabla u\|^p_{p, \Omega}\right)^{\frac{1}{p}} \text{ for } 1\le p<\infty\\
  		\underset{x\in\Omega}{\text{ess sup}}{|u(x)|+|\nabla u(x)|} \text{ for } p=\infty.
  	\end{cases}
  \end{align*}
  The Banach Space 
  \begin{align*}
  	W_p^{(2, 1)}(S\times\Omega)=\left\{u | u, \frac{\partial u}{\partial t}, \frac{\partial u}{\partial x_i}, \frac{\partial^2 u}{\partial x_i\partial x_j}\in L^p(S\times\Omega)\text{ for all } i,j=1, 2, . . ., n\right\}
  \end{align*}
 is the solution space together with the norm
 \begin{align*}
 	\|u\|^{(2, 1)}_{p, S\times\Omega}=\left(\|u\|^p_{p, S\times\Omega}+\left\lVert\frac{\partial u}{\partial t}\right\rVert^p_{p, S\times\Omega}+\sum_{i=1}^{n}\left\lVert\frac{\partial u}{\partial x_i}\right\rVert^p_{p, S\times\Omega}+\sum_{i, j=1}^{n}\left\lVert\frac{\partial^2 u}{\partial x_i\partial x_j}\right\rVert^p_{p, S\times\Omega}\right)^{\frac{1}{p}}.
 \end{align*}
  $H^{1, q}(\Omega)^*$ denotes the dual of the space $H^{1, q}(\Omega)$. We denote the duality pairing between $H^{1, q}(\Omega)$ and $H^{1, q}(\Omega)^*$ by $\langle u, v\rangle_{H^{1,q}(\Omega)^{*}\times H^{1,q}(\Omega)}$. We define $L^{p}(\Omega)\hookrightarrow H^{1,q}(\Omega)^{*}$ as 
  \begin{eqnarray}
  	\langle f, v\rangle_{H^{1,q}(\Omega)^{*}\times H^{1,q}(\Omega)}&=&\langle u,v\rangle_{L^{p}(\Omega)\times L^{q}(\Omega)}:=\int_{\Omega}u v\,dx \textnormal{ for }u\in L^{p}(\Omega),\; v\in H^{1,q}(\Omega).\notag	
\end{eqnarray}
 Now for a $\beta\in\mathbb{R}$, we define $\beta_{+} := \max\{ \beta, 0 \}\ge 0 \; \text{ and } \beta_{-} := \max\{ -\beta, 0 \}\ge 0$ such that $\beta=\beta_{+}-\beta_{-}$. The set of non negative integers is denoted by $\mathbb{Z}_0^+=\mathbb{Z}^+\cup\{0\}=\{\beta\in\mathbb{Z} | \beta>0 \}\cup\{0\}$.
 \begin{lemma}\label{lemma:ur}
 	For $m=1, 2, . . ., I$ and $a_l, \bar{a}_l\in \mathbb{R}$
 	\begin{align*}
 		a_1a_2. . . a_I-\bar{a}_1\bar{a}_2. . . \bar{a}_I=\sum_{m=1}^{I}a_1a_2. . .a_{m-1}(a_m-\bar{a}_m)\bar{a}_{m+1}\bar{a}_{m+2}. . .\bar{a}_I.
 	\end{align*}
 \end{lemma}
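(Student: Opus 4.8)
The plan is to recognize the right-hand side as a telescoping sum. First I would introduce, for each index $m=0,1,\ldots,I$, the auxiliary product
\[
P_m := a_1 a_2 \cdots a_m\,\bar{a}_{m+1}\bar{a}_{m+2}\cdots\bar{a}_I,
\]
with the standard convention that an empty product equals $1$, so that $P_0=\bar{a}_1\bar{a}_2\cdots\bar{a}_I$ and $P_I=a_1a_2\cdots a_I$. With this notation the claimed identity reduces to the statement $P_I-P_0=\sum_{m=1}^{I}\bigl(P_m-P_{m-1}\bigr)$, which is nothing but the classical collapsing of a telescoping series.

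The single computational step is to verify that the $m$-th summand in the statement equals $P_m-P_{m-1}$. Since $P_m$ and $P_{m-1}$ share the common prefix $a_1\cdots a_{m-1}$ and the common suffix $\bar{a}_{m+1}\cdots\bar{a}_I$, differing only in the $m$-th factor ($a_m$ versus $\bar{a}_m$), factoring these out yields
\[
P_m-P_{m-1}=a_1\cdots a_{m-1}\,(a_m-\bar{a}_m)\,\bar{a}_{m+1}\cdots\bar{a}_I,
\]
which is precisely the $m$-th term appearing in the lemma. Summing over $m=1,\ldots,I$ then cancels all intermediate $P_m$, leaving $P_I-P_0$, i.e. the left-hand side $a_1\cdots a_I-\bar{a}_1\cdots\bar{a}_I$.

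Alternatively, I would argue by induction on $I$: the base case $I=1$ reads $a_1-\bar{a}_1=a_1-\bar{a}_1$, and the inductive step rests on the elementary factorization
\[
a_1\cdots a_{I+1}-\bar{a}_1\cdots\bar{a}_{I+1}=(a_1\cdots a_I)(a_{I+1}-\bar{a}_{I+1})+(a_1\cdots a_I-\bar{a}_1\cdots\bar{a}_I)\,\bar{a}_{I+1},
\]
after which the induction hypothesis applied to the second term (whose suffix is extended by the factor $\bar{a}_{I+1}$) supplies the terms for $m=1,\ldots,I$ while the first term supplies the $m=I+1$ term.

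Because the identity is a purely algebraic rearrangement over $\RR$ with no analytic content, there is no genuine obstacle here; the only point requiring mild care is the bookkeeping of the boundary products — the empty prefix when $m=1$ and the empty suffix when $m=I$ — which the empty-product convention resolves cleanly.
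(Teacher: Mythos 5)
Your proof is correct and fills in exactly the standard telescoping computation that the paper dismisses as ``mere calculation''; there is no substantive difference in approach. Both your telescoping-sum argument and your alternative induction are valid, and your handling of the empty prefix at $m=1$ and empty suffix at $m=I$ is properly accounted for.
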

\begin{proof}
	It is mere calculation.
\end{proof}
 \begin{lemma}(Trace Theorem)
 	Let $1\le p<\infty$ and $\Omega\subset\mathbb{R}^n$ be a bounded domain with sufficiently smooth boundary $\partial\Omega$. Then there exists a bounded linear operator $T : H^{1, p}(\Omega) \rightarrow L^p(\partial\Omega)$ such that
 	\begin{align*}
 		(i) Tu := u|_{\partial\Omega}\text{ if } u\in H^{1,p}(\Omega)\cap C(\bar{\Omega})
 	\end{align*}
  and
  \begin{align*}
  	(ii) \|Tu\|_{p, \partial\Omega}\le C\|u\|_{H^{1,p}(\Omega)} \text{ for each } u\in H^{1, p}(\Omega),
  \end{align*}
 where $C$ depends on $p$ and $\Omega$ but it is independent of $u$.
 \end{lemma}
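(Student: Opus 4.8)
The plan is to construct $T$ first on a dense subspace of smooth functions and then extend it by continuity. Since $\Omega$ is bounded with smooth boundary, $C^\infty(\bar\Omega)$ (equivalently $C^1(\bar\Omega)$) is dense in $H^{1,p}(\Omega)$; for such $u$ the restriction $u|_{\partial\Omega}$ is classically well defined, so I would set $Tu := u|_{\partial\Omega}$ and aim to prove the bound (ii) for these $u$. Once (ii) holds on a dense subspace, the bounded linear map extends uniquely to all of $H^{1,p}(\Omega)$, and (i) then follows for $u\in H^{1,p}(\Omega)\cap C(\bar\Omega)$ by approximating $u$ in the $H^{1,p}$-norm and passing to the limit, noting that the approximating traces must coincide with $u|_{\partial\Omega}$.

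First I would localise. Using the smoothness of $\partial\Omega$, cover $\partial\Omega$ by finitely many charts with a subordinate partition of unity $\{\zeta_k\}$, and in each chart use a $C^1$-diffeomorphism flattening the boundary onto a piece of the hyperplane $\{x_n=0\}$. This reduces the estimate to the model case of a function supported in a half-ball $B^+ = B\cap\{x_n>0\}$ whose trace lives on $\Gamma = B\cap\{x_n=0\}$.

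The key local estimate is where the real work lies. On the half-ball choose a cutoff $\eta\ge 0$ with $\eta\equiv 1$ near $\Gamma$ and compactly supported in $B$. By the fundamental theorem of calculus (the divergence theorem) one writes
\begin{align*}
\int_{\Gamma}|u|^p\,dx' \le \int_{\Gamma}\eta\,|u|^p\,dx' = -\int_{B^+}\partial_{x_n}\bigl(\eta\,|u|^p\bigr)\,dx,
\end{align*}
and then expands the derivative. Controlling the resulting factor $p\,\eta\,|u|^{p-1}\,|\partial_{x_n}u|$ by Young's inequality, $|u|^{p-1}|\partial_{x_n}u|\le \tfrac{p-1}{p}|u|^{p}+\tfrac1p|\partial_{x_n}u|^{p}$ (with the term simply bounded directly when $p=1$), yields
\begin{align*}
\int_{\Gamma}|u|^p\,dx' \le C\int_{B^+}\bigl(|u|^p+|\nabla u|^p\bigr)\,dx.
\end{align*}
Transporting this back through the diffeomorphism and summing against $\{\zeta_k\}$ gives the global bound $\|Tu\|_{p,\partial\Omega}\le C\|u\|_{H^{1,p}(\Omega)}$ with $C=C(p,\Omega)$.

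The main obstacle is precisely this local half-space estimate together with the bookkeeping of the coordinate change: one must verify that the flattening diffeomorphism and the partition of unity introduce only factors depending on $p$ and $\Omega$ (the Jacobians are bounded above and below because the chart is $C^1$ on a compact set), so that the final constant is genuinely independent of $u$. Everything else—the density of smooth functions and the extension-by-continuity of a bounded linear operator—is standard.
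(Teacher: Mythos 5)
Your proposal is correct and is essentially the same argument as the paper's proof, which simply cites Theorem 1 of Section 5.5 of Evans: localization by a partition of unity, flattening the boundary, the divergence-theorem identity with a cutoff, and Young's inequality are exactly the steps of that textbook proof. The only caveat is that the paper delegates all details to the reference, whereas you would still need to write out the chart bookkeeping you describe in your final paragraph.
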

  \begin{proof}
  	The proof is done in the Theorem 1 of Section 5.5 of \cite{evans2010partial}.
  \end{proof}
    \begin{defin}(Classical Solutions \cite{lunardi2012analytic}).\label{dfn:1}\\
    	A set of concentrations $u=(u_1, u_2, . . ., u_I)$ is said to be a classical solution for the system $(\mathcal{P})$ in the interval $S$ if $u_i\in C([0, T); L^p(\Omega))\cap C^1((0, T)\times\bar{\Omega})$, for all $i=1, 2, . . ., I$ and $p>n$ and $u$ satisfies each equation in $(\mathcal{P})$ point-wise.
    \end{defin}
    \begin{lemma}(Maximal Regularity \cite{lamberton1987equations}).\label{lemma:1}\\
    	Let $0<\tau<T$ and $p\in (1,\infty)$. Let $0\le\theta\in L^p((\tau,T)\times\Omega)$ and $\|\theta\|_{p,(\tau,T)\times\Omega}=1$ and $\psi$ be the weak solution to
    	\begin{equation}\label{eqn:MR1}
    		\left\{ \begin{aligned} 
    			\frac{\partial\psi}{\partial t}+\Delta\psi&=-\theta\text{ in }(\tau,T)\times\Omega,\\
    			\nabla\psi.\vec{n}&=0\text{ on } (\tau,T)\times\partial\Omega,\\
    			\psi(T, x)&=0 \text{ in } \Omega.
    		\end{aligned}\right.
    	\end{equation}
    	Then $\psi\ge0$, 
    	\begin{align}\label{eqn:MR2}
    		\|\psi\|^{2,1}_{p, (\tau, T)\times\Omega}\le C_{T-\tau,p}
    	\end{align}
    	and 
    	\begin{align}\label{eqn:MR3}
    		\|\Delta\psi\|_{p, (\tau,T)\times\Omega}\le C_{mr}(p),
    	\end{align}
     where $C_{mr}(p)$ in the maximal regularity constant independent of $\tau$ and $T$ but depends on $p$.
    \end{lemma}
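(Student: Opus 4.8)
The plan is to pass to a forward-in-time problem and then to separate the two assertions: the sign of $\psi$ will follow from positivity of the heat semigroup, while the two norm bounds are instances of $L^p$ maximal regularity. First I would reverse time by setting $s=T-t$ and $\phi(s,x):=\psi(T-s,x)$, $g(s,x):=\theta(T-s,x)$, which turns \eqref{eqn:MR1} into the forward Neumann heat problem
\begin{equation*}
\left\{\begin{aligned}
\frac{\partial\phi}{\partial s}-\Delta\phi&=g\quad\text{in }(0,T-\tau)\times\Omega,\\
\nabla\phi\cdot\vec{n}&=0\quad\text{on }(0,T-\tau)\times\partial\Omega,\\
\phi(0,x)&=0\quad\text{in }\Omega,
\end{aligned}\right.
\end{equation*}
with $0\le g\in L^p$ and $\|g\|_{p,(0,T-\tau)\times\Omega}=1$. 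Writing $(e^{t\Delta_N})_{t\ge0}$ for the Neumann heat semigroup on $L^p(\Omega)$, the Duhamel formula gives $\phi(s)=\int_0^s e^{(s-r)\Delta_N}g(r)\,dr$. Since $\psi(t,x)=\phi(T-t,x)$ the norms over $(\tau,T)\times\Omega$ and $(0,T-\tau)\times\Omega$ coincide, so all three claims can be read off from this representation together with standard properties of $e^{t\Delta_N}$.

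For nonnegativity I would use that the Neumann heat semigroup is positivity preserving (it is submarkovian: positive, symmetric and $L^\infty$-contractive). As $g(r)\ge0$ a.e., each integrand $e^{(s-r)\Delta_N}g(r)$ is nonnegative, whence $\phi\ge0$ and therefore $\psi\ge0$. Equivalently one may test the equation with the negative part $\phi_-$ and invoke the weak maximum principle, but the semigroup argument is cleanest here.

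The estimate \eqref{eqn:MR3} is the crux, and the nontrivial point is precisely that $C_{mr}(p)$ may be chosen independent of $\tau$ and $T$. This is the content of Lamberton's maximal $L^p$ regularity theorem: because $(e^{t\Delta_N})$ is a contraction semigroup on $L^p(\Omega)$ which is also bounded analytic, the map $g\mapsto\Delta\phi$ (equivalently $g\mapsto\partial_s\phi$) is bounded on $L^p((0,\infty)\times\Omega)$ with a constant depending only on $p$. Extending $g$ by zero beyond $s=T-\tau$, applying this half-line estimate, and restricting back to $(0,T-\tau)$ yields $\|\Delta\phi\|_{p,(0,T-\tau)\times\Omega}\le C_{mr}(p)\|g\|_{p,(0,T-\tau)\times\Omega}=C_{mr}(p)$, with no dependence on the interval length. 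The main obstacle of the whole lemma is concentrated here: the interval-uniformity of $C_{mr}(p)$ rests on the translation invariance of the half-line problem and on the submarkovian structure of the heat semigroup, and is exactly the reason one cannot simply quote finite-interval maximal regularity, whose constant would deteriorate as $T-\tau$ grows.

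Finally, for the full bound \eqref{eqn:MR2} I would assemble the remaining pieces. The equation gives $\partial_s\phi=\Delta\phi+g$, so $\|\partial_s\phi\|_p\le\|\Delta\phi\|_p+\|g\|_p\le C_{mr}(p)+1$; elliptic $L^p$-regularity for the Neumann Laplacian controls all second spatial derivatives by $\|\Delta\phi\|_p+\|\phi\|_p$, and interpolation controls $\nabla\phi$. It remains to bound $\|\phi\|_p$, and this is where the dependence on $T-\tau$ enters: contraction of $e^{t\Delta_N}$ gives $\|\phi(s)\|_{L^p(\Omega)}\le\int_0^s\|g(r)\|_{L^p(\Omega)}\,dr\le s^{1/p'}\|g\|_{p,(0,T-\tau)\times\Omega}$ by Hölder, and integrating the $p$-th power over $s\in(0,T-\tau)$ yields $\|\phi\|_p\le(T-\tau)\,p^{-1/p}$. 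Collecting these estimates bounds $\|\phi\|^{2,1}_{p,(0,T-\tau)\times\Omega}$, hence $\|\psi\|^{2,1}_{p,(\tau,T)\times\Omega}$, by a constant $C_{T-\tau,p}$ depending on $p$ and on the horizon $T-\tau$, which is \eqref{eqn:MR2}.
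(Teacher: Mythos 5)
Your argument is correct. Note, however, that the paper offers no proof of this lemma at all: it is stated as a quoted result from Lamberton's work, so there is no in-paper argument to diverge from. Your reconstruction rests on exactly the same external theorem for the one genuinely nontrivial point --- the interval-uniform bound $\|\Delta\phi\|_{p}\le C_{mr}(p)\|g\|_{p}$, which you correctly attribute to maximal $L^p$-regularity for the submarkovian, bounded analytic Neumann heat semigroup on the half-line (extension of $g$ by zero and restriction back is the right way to transfer the half-line constant to $(0,T-\tau)$ without degradation). The surrounding reductions are all sound: the time reversal $s=T-t$ turns the backward problem into a forward one with zero initial data; positivity of $\psi$ follows from positivity preservation of $e^{t\Delta_N}$ applied under the Duhamel integral; and the full $W^{2,1}_p$ bound with a $(T-\tau)$-dependent constant is correctly assembled from the equation itself (for $\partial_s\phi$), elliptic $L^p$-regularity for the Neumann Laplacian (for the second spatial derivatives), interpolation (for $\nabla\phi$), and the Hölder-in-time estimate $\|\phi\|_{p,(0,T-\tau)\times\Omega}\le (T-\tau)\,p^{-1/p}$, which is where and only where the horizon length enters. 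This is a faithful and complete justification of the cited statement.
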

    \begin{lemma}(Embedding Inequalities \cite{ladyvzenskaja1988linear}).\label{lemma:2}
    	Let $1<p<\infty$.\\
    	$(i)$ If $p\le \frac{n+2}{2}$, then for all $f\in W^{2,1}_p((\tau, T)\times\Omega)$ we have
    	\begin{align*}
    		\|f\|_{q, (\tau,T)\times\Omega}\le C(p,T-\tau)\|f\|^{2,1}_{p, (\tau, T)\times\Omega},\text{ for all } 1\le q<\frac{(n+2)p}{n+2-2p}.
    	\end{align*}
    	$(ii)$ If $p>\frac{n+2}{2}$, then 
    	\begin{align*}
    		\|f\|_{\infty, (\tau, T)\times\Omega}\le C(p, T-\tau)\|f\|^{2,1}_{p, (\tau, T)\times\Omega},
    	\end{align*}
    	where the constant $C(p, T-\tau)$ depends on $\Omega, p$ and $(T-\tau)$.
    \end{lemma}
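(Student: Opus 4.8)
The plan is to recognize this as the parabolic analogue of the classical Sobolev embedding, in which the anisotropic space $W^{2,1}_p$ assigns the time variable the parabolic weight $2$ (so that $\partial_t$ scales like $\Delta$) and the space–time cylinder $(\tau,T)\times\Omega$ carries homogeneous dimension $N=n+2$. Under the parabolic dilation $x\mapsto\lambda x$, $t\mapsto\lambda^2 t$, the measure scales like $\lambda^{n+2}$ while each of $\partial_t$ and $\partial^2_{x_ix_j}$ scales like $\lambda^{-2}$; matching the homogeneities of $\|f\|_{q}$ and $\|f\|^{2,1}_{p}$ forces
\[
\frac1q=\frac1p-\frac{2}{n+2},
\]
that is $q=\frac{(n+2)p}{n+2-2p}$, which is exactly the critical exponent in $(i)$, with the threshold $p=\frac{n+2}{2}$ separating the two regimes.

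To turn this scaling heuristic into an estimate I would first reduce to the whole space. Using the smoothness of $\partial\Omega$, I would construct a bounded extension operator $E\colon W^{2,1}_p((\tau,T)\times\Omega)\to W^{2,1}_p((\tau-\delta,T+\delta)\times\mathbb{R}^n)$ by flattening the lateral boundary and reflecting, together with an extension past the temporal ends $t=\tau$ and $t=T$; translating in time so that the interval becomes $(0,T-\tau)$ renders the extension constant dependent only on the length $T-\tau$, which accounts for the factor $C(p,T-\tau)$ in the statement. This is the step I expect to be the main obstacle: the reflection must respect the anisotropic ($2$-in-space, $1$-in-time) derivative structure \emph{simultaneously} at the corner where the lateral and temporal boundaries meet, and one must verify that the operator norm stays uniform as the interval is shifted.

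With $f$ extended, I would exploit the heat-kernel representation. Setting $g:=\partial_t f-\Delta f\in L^p$, the function $f$ is recovered, up to a contribution from its trace at the bottom time slice, as a parabolic potential $f=\int G(x-y,t-s)\,g(y,s)\,dy\,ds$, where $G$ is the Gaussian fundamental solution of the heat equation. Since $G(x,t)\sim t^{-n/2}\exp(-|x|^2/4t)$, this convolution is a parabolic Riesz potential of order $2$ whose kernel is homogeneous of degree $2-N=-n$ in the parabolic metric $\rho((x,t),(y,s))=\max\{|x-y|,\,|t-s|^{1/2}\}$. The anisotropic Hardy–Littlewood–Sobolev inequality then gives $\|f\|_{q}\le C\|g\|_{p}\le C\|f\|^{2,1}_{p}$ with the exponent above whenever $1<p<\frac{n+2}{2}$, yielding $(i)$; the limiting case $p=\frac{n+2}{2}$ follows by interpolation, so that every finite $q$ is admissible.

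For $(ii)$, when $p>\frac{n+2}{2}$ the potential kernel is locally $L^{p'}$ with respect to the parabolic measure, since $p'(N-2)=p'\,n<N$ precisely in this range. Hölder's inequality applied directly to the convolution then bounds $\|f\|_{\infty}\le C\|g\|_{p}$, hence $\|f\|_{\infty}\le C\|f\|^{2,1}_{p}$. The lower-order terms and the trace of $f$ at $t=\tau$, which lands in a Besov space and enters through the heat semigroup acting on the initial slice, are dominated by the full $W^{2,1}_p$ norm in the same manner. Collecting the constants—each depending only on $p$, $\Omega$, and the interval through its length—completes the argument; alternatively, one may simply invoke the corresponding results of Ladyzhenskaya, Solonnikov and Ural'tseva \cite{ladyvzenskaja1988linear}.
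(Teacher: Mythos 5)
The paper offers no proof of this lemma: it is quoted as a known embedding theorem and attributed directly to Ladyzhenskaya--Solonnikov--Ural'tseva \cite{ladyvzenskaja1988linear}, so there is no internal argument to compare yours against. Your sketch is nonetheless a correct outline of how the result is actually established, and it isolates the right ingredients: the parabolic scaling with homogeneous dimension $N=n+2$ that forces $\frac{1}{q}=\frac{1}{p}-\frac{2}{n+2}$, i.e.\ $q=\frac{(n+2)p}{n+2-2p}$; an extension operator adapted to the anisotropic structure of $W^{2,1}_p$ on the cylinder (the compatibility at the corner where the lateral and temporal boundaries meet is indeed the delicate point, and it is also where the dependence of the constant on $T-\tau$ and on $\partial\Omega$ enters); the representation of $f$ as a heat potential of $g=\partial_t f-\Delta f$ plus a semigroup term coming from the bottom trace; the anisotropic Hardy--Littlewood--Sobolev inequality for part $(i)$; and H\"older's inequality against the kernel, which lies in $L^{p'}$ of the parabolic measure exactly when $p>\frac{n+2}{2}$, for part $(ii)$. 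Two points deserve explicit care: at the borderline $p=\frac{n+2}{2}$ the displayed exponent degenerates and the conclusion must be read as ``all finite $q$'', which your interpolation remark handles; and the bottom-trace term must be measured in the Besov trace space $W^{2-2/p}_p(\Omega)$, which is in turn controlled by $\|f\|^{2,1}_{p}$, as you indicate. Given that the paper itself simply cites the reference, your closing alternative --- invoking \cite{ladyvzenskaja1988linear} directly --- is precisely what the authors do.
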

   We are now ready to state the main result of this paper:
    \begin{thm}\label{thm:1}
    	Let the assumptions $({\bf A1.})- ({\bf A4.})$ holds true. If the maximal regularity constant $C_{mr}(p^{'})<1$ for some $p^{'}$ as explained in Lemma \ref{lemma:1}, where $\frac{1}{p}+\frac{1}{p^{'}}=1$. Then there exists a unique positive global in time classical solution $u_i\in W^{2, 1}_p(S\times\Omega)$, for all $i=1, 2, . . ., I$ of the problem $(\mathcal{P})$. Moreover, the solution $u_i\in C(S ; L^p(\Omega))\cap C^1(S\times\bar{\Omega})$. In other words, the solution exists in the classical sense defined in the Definition \ref{dfn:1}. Furthermore, the solution in bounded uniformly in time, i.e.
    	\begin{align*}
    		\sup_{t\ge0}\|u_i(t)\|_{\infty, \Omega}<+\infty \text{ for all } i=1, 2, . . .,I.
    	\end{align*}
    \end{thm}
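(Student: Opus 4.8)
The plan is to follow the usual program for reaction--diffusion systems --- local existence, positivity, a chain of a priori estimates culminating in an $L^\infty$ bound, then continuation --- but to carry out the central $L^p$ step by a duality argument tailored to the spread of the diffusion coefficients rather than assuming them close. First I would invoke (A3) together with Amann's theory to obtain a unique maximal classical solution $u=(u_1,\dots,u_I)$ on $[0,T_{\max})$, so that the entire task reduces to showing $T_{\max}=\infty$ with uniform bounds. Next I would prove nonnegativity: the mass-action structure is quasi-positive, since one checks directly that $f_i(u)\ge 0$ whenever $u_i=0$ and $u_j\ge 0$ for $j\ne i$ (the vanishing factor kills exactly the rate term carrying the negative sign); combined with $u_i^0\ge 0$ from (A1) and the sign condition $b_i\le 0$ from (A2), a minimum-principle/comparison argument gives $u_i\ge 0$ on $[0,T_{\max})$. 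I then set $w:=\sum_i u_i\ge 0$ and record the $L^1$ bound obtained by summing the equations, using (A4) and $\sum_i b_i\le 0$, and Gronwall: $\sup_{[0,T]}\|w(t)\|_{1,\Omega}\le C(T)$.

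The heart of the argument is an $L^p$ a priori bound on $w$. Summing the equations and using (A4) gives $\partial_t w-\Delta\big(\sum_i D_i u_i\big)\le C_1 w+C_2$, with boundary flux $\sum_i b_i\le 0$; writing $\sum_i D_i u_i=A\,w$ defines a solution-dependent average diffusion $A(t,x)\in[\min_i D_i,\max_i D_i]$. I would then run the duality method of Lemma \ref{lemma:1}: for nonnegative $\theta$ with $\|\theta\|_{p'}=1$ let $\psi\ge 0$ solve the backward problem $\partial_t\psi+\Delta\psi=-\theta$, and test the differential inequality for $w$ against $\psi$ (the integration by parts being justified by the heat-operator regularization). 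This bounds $\int w\theta$ by the initial term $\int_\Omega w(\tau)\psi(\tau)$, the boundary contribution $\int_{\partial\Omega}\psi\,|\sum_i b_i|$, the forcing $\int(C_1 w+C_2)\psi$, and the critical cross term $\int(1-A)\,w\,\Delta\psi$. The crucial point --- and where the new choice of common diffusion coefficient enters --- is that one is free to normalize the reference diffusion: rescaling so that the midpoint $(\min_i D_i+\max_i D_i)/2$ equals $1$ forces $\|1-A\|_\infty\le\delta:=(\max_i D_i-\min_i D_i)/(\max_i D_i+\min_i D_i)<1$ with no quasi-uniformity hypothesis. Then \eqref{eqn:MR3} gives $|\int(1-A)w\Delta\psi|\le\delta\,C_{mr}(p')\,\|w\|_p$, and taking the supremum over $\theta$ (which recovers $\|w\|_p$ precisely because $w\ge 0$) yields $(1-\delta\,C_{mr}(p'))\|w\|_p\le(\text{lower-order terms})$. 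Since both $\delta<1$ and $C_{mr}(p')<1$, the factor $1-\delta\,C_{mr}(p')$ is positive and the term can be absorbed.

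I expect this absorption to be the main obstacle. Besides the cross term one must also absorb the linear forcing $C_1\int w\psi$, whose coefficient involves $\|\psi\|_{p'}$ controlled by \eqref{eqn:MR2}; this I would handle by restricting to short time windows (where that constant is small) and iterating across $[0,T]$, feeding in the $L^1$ bound, while the inhomogeneous boundary term is controlled by the trace theorem together with the maximal-regularity bound \eqref{eqn:MR2} on $\psi$ and by (A2). The outcome is $w\in L^p(S\times\Omega)$, hence each $u_i\in L^p$, on every finite interval. With this in hand I would bootstrap: the growth estimate \eqref{eqn:gc} gives $|f_i(u)|\le C|u|^\lambda$, so $f_i(u)\in L^{p/\lambda}$; treating each equation as a linear heat equation with this right-hand side and applying \eqref{eqn:MR2} together with the embeddings of Lemma \ref{lemma:2} raises the integrability exponent by a fixed amount per step. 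Choosing $p'$ with $C_{mr}(p')<1$ (feasible as discussed in the paper) makes the starting exponent $p$ large enough that finitely many iterations reach $p>(n+2)/2$, whence Lemma \ref{lemma:2}(ii) gives $u_i\in L^\infty$ and then $u_i\in W^{2,1}_p(S\times\Omega)$; parabolic regularity finally yields $u_i\in C(S;L^p(\Omega))\cap C^1(S\times\bar\Omega)$, a classical solution in the sense of Definition \ref{dfn:1}. Uniqueness follows from (A3) and a Gronwall estimate, and since the bounds are finite on every $[0,T]$ the blow-up criterion forces $T_{\max}=\infty$.

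For the uniform-in-time bound I would exploit that the constant $C_{mr}(p')$ in \eqref{eqn:MR3} is independent of $\tau$ and $T$. Rerunning the duality estimate on sliding windows $(\tau,\tau+1)$ with the same absorption factor $1-\delta\,C_{mr}(p')>0$ produces an $L^p$ bound on $w$ over each unit window that is independent of $\tau$, and repeating the bootstrap on these windows --- again using the time-translation invariance of the regularity constants --- delivers $\sup_{t\ge 0}\|u_i(t)\|_{\infty,\Omega}<\infty$. The delicate points here are that the boundary data must be controlled uniformly over the windows and that the passage from a window $L^p$ bound to a pointwise-in-time $L^\infty$ bound must not reintroduce a $T$-dependence.
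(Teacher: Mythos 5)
Your proposal is correct and follows the same overall architecture as the paper's proof --- nonnegativity, a duality $L^p$ estimate built on Lemma \ref{lemma:1}, a bootstrap through \eqref{eqn:gc} and Lemma \ref{lemma:2} up to $L^\infty$, Gronwall for uniqueness, and sliding time windows for the uniform-in-time bound --- but the central duality step is set up differently. The paper tests each of the $I$ equations separately against the dual solution of $\partial_t\psi+D\Delta\psi=-\theta$ with the reference diffusion $D=D_{\max}/2$ (Lemma \ref{lemma:3}) and sums afterwards; since $|D_i-D|\le D_{\max}/2$ and $\|\Delta\psi\|_{p'}\le C_{mr}(p')/D$, the cross term carries the coefficient exactly $C_{mr}(p')$, so absorption requires precisely the stated hypothesis $C_{mr}(p')<1$. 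You sum first, encode the unequal diffusions in a measurable averaged coefficient $A=\sum_i D_iu_i/w\in[D_{\min},D_{\max}]$, and normalize to the midpoint $(D_{\min}+D_{\max})/2$, which yields the sharper coefficient $\delta\,C_{mr}(p')$ with $\delta=(D_{\max}-D_{\min})/(D_{\max}+D_{\min})<1$. This is the classical improved-duality formulation (as in the work of Ca\~nizo, Desvillettes and Fellner that the paper cites) and in fact proves the theorem under the weaker condition $\delta\,C_{mr}(p')<1$; the paper's choice gains nothing over yours beyond avoiding the quotient $A$. Your handling of the forcing term $C_1\int w\psi$ by short windows plus iteration is also more robust than the paper's one-shot absorption, which tacitly needs $C_1C_{T-\tau,p'}$ small. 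Two details to make explicit: the paper establishes nonnegativity by testing with $-[u_i]_-$ rather than by a comparison principle (your quasi-positivity check of the mass-action $f_i$ is correct, but the inhomogeneous flux $b_i\le0$ must be threaded through whichever argument you choose), and the bootstrap only increases the integrability exponent when $p>(\lambda-1)(n+2)/2$ --- this is the standing hypothesis of Proposition \ref{prop:1}, and it is what your phrase ``large enough $p$'' must actually deliver when reconciled with the range of $p'$ for which $C_{mr}(p')<1$ is known to hold.
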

    \begin{cor}
    	We can establish that there always exists some $p^{'}\in[\frac{3}{2}, 2]$ such that $C_{mr}(p^{'})<1$.\\
    	We have from Lemma 3.2 of \cite{canizo2014improved} that for any $mr>0$ and $\frac{3}{2}\le r \le2$ the inequality holds:
    	\begin{align}\label{eqn:ci}
    		C_{mr}(r)\le (mr)^{-\frac{4}{r}(r-\frac{3}{2})}(C_{mr}(\frac{3}{2}))^{\frac{3}{r}(2-r)}.
    	\end{align}
    	Now we choose $r=p^{'}$ and $mr=\frac{D_{\max}}{2}$. Then $C_{mr}(p^{'})<1$ is satisfied if the estimate is satisfied:
    	\begin{align}\label{eqn:d1}
    		-\frac{4}{p^{'}}(p^{'}-\frac{3}{2})\ln(\frac{D_{\max}}{2})+\frac{3}{p^{'}}(2-p^{'})\ln(C_{\frac{D_{\max}}{2}}(\frac{3}{2}))<0.
    	\end{align}
    	We put $g(p^{'})=-\frac{4}{p^{'}}(p^{'}-\frac{3}{2})$, then  $1-g(p^{'})=\frac{3}{p^{'}}(2-p^{'})$. Thus \eqref{eqn:d1} yields
    	\begin{align}
    		&(1-g(p^{'}))\left(\ln(\frac{D_{\max}}{2})+\ln(C_{\frac{D_{\max}}{2}}(\frac{3}{2}))\right)<\ln(\frac{D_{\max}}{2})\notag\\
    		\implies& \frac{3}{p^{'}}(2-p^{'})\ln\left(\frac{D_{\max}}{2}C_{\frac{D_{\max}}{2}}(\frac{3}{2})\right)<\ln(\frac{D_{\max}}{2}).\label{eqn:de}
    	\end{align}
    	Therefore for $\frac{D_{\max}}{2}C_{\frac{D_{\max}}{2}}(\frac{3}{2})>1$, the inequality \eqref{eqn:de} holds true as soon as 
    	\begin{align*}
    		2-p^{'}<\frac{p^{'}}{3}\frac{\ln(\frac{D_{\max}}{2})}{\ln\left(\frac{D_{\max}}{2}C_{\frac{D_{\max}}{2}}(\frac{3}{2})\right)}.
    	\end{align*}
    	That means we have to choose a $p^{'}\in [\frac{3}{2}, 2]$ satisfying
    	\begin{align*}
    		2-p^{'}<\frac{1}{2}\frac{\ln(\frac{D_{\max}}{2})}{\ln\left(\frac{D_{\max}}{2}C_{\frac{D_{\max}}{2}}(\frac{3}{2})\right)}.
    	\end{align*}
    	Similarly, for $\frac{D_{\max}}{2}C_{\frac{D_{\max}}{2}}(\frac{3}{2})<1$ we need to pick a $p^{'}\in [\frac{3}{2}, 2]$ satisfying
    	\begin{align*}
    		2-p^{'}>\frac{2}{3}\frac{\ln(\frac{D_{\max}}{2})}{\ln\left(\frac{D_{\max}}{2}C_{\frac{D_{\max}}{2}}(\frac{3}{2})\right)},
    	\end{align*}
    	so that \eqref{eqn:de} is satisfied.
    \end{cor}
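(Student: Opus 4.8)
The plan is to treat the target $C_{mr}(p')<1$ as a one-variable inequality obtained by specializing the comparison estimate \eqref{eqn:ci}. First I would set $r=p'$ and fix the scaling parameter $mr=\frac{D_{\max}}{2}$, so that \eqref{eqn:ci} reads
\[
C_{mr}(p')\le\left(\tfrac{D_{\max}}{2}\right)^{-\frac{4}{p'}\left(p'-\frac{3}{2}\right)}\left(C_{\frac{D_{\max}}{2}}(\tfrac{3}{2})\right)^{\frac{3}{p'}(2-p')}.
\]
Taking logarithms, the desired bound $C_{mr}(p')<1$ is implied by the negativity of the right-hand exponent, that is, by \eqref{eqn:d1}. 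Thus the whole claim is reduced to producing a single $p'\in[\frac{3}{2},2]$ for which this scalar inequality holds.

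Next I would exploit the algebraic relation between the two exponents. A direct computation shows that, after clearing the common factor $p'$, both are affine in $p'$ and satisfy the clean identity $\frac{3}{p'}(2-p')+\frac{4}{p'}(p'-\frac{3}{2})=1$. Using this to eliminate one exponent in \eqref{eqn:d1} and collecting the logarithms rewrites the condition in the compact form \eqref{eqn:de},
\[
\frac{3}{p'}(2-p')\,\ln\!\left(\tfrac{D_{\max}}{2}\,C_{\frac{D_{\max}}{2}}(\tfrac{3}{2})\right)<\ln\!\left(\tfrac{D_{\max}}{2}\right).
\]
At this point I would record the two endpoints: the coefficient $\frac{3}{p'}(2-p')$ is continuous and strictly decreasing from $1$ at $p'=\frac{3}{2}$ to $0$ at $p'=2$, so \eqref{eqn:de} degenerates to $C_{\frac{D_{\max}}{2}}(\frac{3}{2})<1$ at the left endpoint and to $\ln(\frac{D_{\max}}{2})>0$ at the right endpoint.

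The core of the argument is then a case split according to the sign of $\ln P$, where $P:=\frac{D_{\max}}{2}\,C_{\frac{D_{\max}}{2}}(\frac{3}{2})$. If $P>1$, I divide \eqref{eqn:de} by the positive number $\ln P$, obtaining $2-p'<\frac{p'}{3}\frac{\ln(D_{\max}/2)}{\ln P}$; since $p'\ge\frac{3}{2}$ gives $\frac{p'}{3}\ge\frac{1}{2}$, it is enough to pick $p'$ satisfying $2-p'<\frac{1}{2}\frac{\ln(D_{\max}/2)}{\ln P}$. If instead $P<1$, division by the negative quantity $\ln P$ reverses the inequality and, using $\frac{p'}{3}\le\frac{2}{3}$, it suffices to pick $p'$ with $2-p'>\frac{2}{3}\frac{\ln(D_{\max}/2)}{\ln P}$. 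In each case the admissible values of $2-p'$ form a subinterval of $[0,\frac{1}{2}]$, and choosing $p'$ accordingly yields the required exponent.

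The step I expect to be the main obstacle is the sign bookkeeping in the division together with the verification that the resulting threshold for $2-p'$ actually meets the window $[0,\frac{1}{2}]$. Because the function $p'\mapsto -4(p'-\frac{3}{2})\ln(\frac{D_{\max}}{2})+3(2-p')\ln(C_{\frac{D_{\max}}{2}}(\frac{3}{2}))$ governing the sign in \eqref{eqn:d1} is affine in $p'$, if it were nonnegative at both endpoints it would remain nonnegative throughout; hence the construction succeeds precisely when at least one endpoint condition is active, namely $D_{\max}>2$ or $C_{\frac{D_{\max}}{2}}(\frac{3}{2})<1$. I would therefore make this dependence explicit and, where necessary, use the freedom in the choice of the common diffusion coefficient $mr$ (with the accompanying scaling of the maximal regularity constant) to guarantee that one of these endpoint conditions holds, thereby securing a valid $p'\in[\frac{3}{2},2]$.
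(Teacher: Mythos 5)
Your proposal is correct and follows the paper's own proof essentially step for step --- the same specialization $r=p^{'}$, $mr=\frac{D_{\max}}{2}$ of \eqref{eqn:ci}, the same logarithmic reduction to \eqref{eqn:d1}, the same compact rewriting \eqref{eqn:de}, and the same case split on the sign of $\ln\left(\frac{D_{\max}}{2}C_{\frac{D_{\max}}{2}}(\frac{3}{2})\right)$ with the respective thresholds $\frac{1}{2}$ and $\frac{2}{3}$. Two details in your write-up are in fact sharper than the paper's text: your exponent identity $\frac{4}{p^{'}}(p^{'}-\frac{3}{2})+\frac{3}{p^{'}}(2-p^{'})=1$ is the correct one (the paper's ``$1-g(p^{'})=\frac{3}{p^{'}}(2-p^{'})$'' is a sign typo for $1+g(p^{'})$, harmless since \eqref{eqn:de} still follows), and your endpoint analysis of the affine function in \eqref{eqn:d1} is exactly right --- the construction produces an admissible $p^{'}\in[\frac{3}{2},2]$ precisely when $D_{\max}>2$ or $C_{\frac{D_{\max}}{2}}(\frac{3}{2})<1$, so the corollary's unconditional ``there always exists'' carries this proviso in the paper just as in your proposal, and neither the paper nor your sketched re-tuning of $mr$ actually removes it.
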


    \section{Existence of Global Classical Solution}
    \subsection{Positivity of the concentrations}
    We utilize the idea of \cite{krautle2011existence} to establish the non-negativity of the concentrations.  We consider the system
    \begin{subequations}
    	\begin{align}
    		\frac{\partial u_i}{\partial {t}} + \nabla.(-{D_i}\nabla u_i)&=f_i([u]_+) \; \text{ in } S\times \Omega,	\label{eqn:M1p}\\
    		-{D_i}\nabla u_i.\vec{n}&=b_i \; \text{ on } S \times \partial\Omega, \label{eqn:M2p}\\
    		u_i(0,x)&=u_i^0(x) \; \text{ in } \Omega,\label{eqn:M3p} 
    	\end{align}	
    \end{subequations}
   We denote the above system by $(\mathcal{P_+})$.
    \begin{lemma}
    	Under the assumptions $({\bf A1.})- ({\bf A4.})$ let us assume that $u_i$ is the concentration satisfying the system $(\mathcal{P_+})$. Then $u_i\ge0$ holds for all $i=1, 2, . . ., I$. Hence $u_i$ satisfies the system $(\mathcal{P})$.
    \end{lemma}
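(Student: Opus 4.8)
The plan is to show that the negative part of each component of the solution to $(\mathcal{P_+})$ must vanish, exploiting the mass-action structure of the source terms together with the sign conditions \textbf{(A1.)} and \textbf{(A2.)}. The decisive structural fact is the \emph{quasi-positivity} of the nonlinearities: for every $v\in\mathbb{R}_+^I$ with $v_i=0$ one has $f_i(v)\ge 0$. To establish this I would inspect each reaction $j$ in \eqref{eqn:st} separately. Setting $v_i=0$ annihilates the forward factor $v_i^{\mu_{ij}}$ whenever $\mu_{ij}>0$ and the backward factor $v_i^{\nu_{ij}}$ whenever $\nu_{ij}>0$; a case distinction on the sign of $s_{ij}=\nu_{ij}-\mu_{ij}$ then shows that the $j$-th contribution $s_{ij}(R_j^f(v)-R_j^b(v))$ is nonnegative in every case (if $s_{ij}>0$ then $\nu_{ij}>0$, so $R_j^b(v)=0$ and the term equals $s_{ij}R_j^f(v)\ge0$; the case $s_{ij}<0$ forces $\mu_{ij}>0$, so $R_j^f(v)=0$ and the term equals $|s_{ij}|R_j^b(v)\ge0$; the case $s_{ij}=0$ is trivial). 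Summing over $j$ gives $f_i(v)\ge0$.

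With quasi-positivity in hand, I would run the standard energy argument using the negative part $u_i^-:=\max\{-u_i,0\}$. Writing $u_i=u_i^+-u_i^-$, applying the chain rule for the Lipschitz map $s\mapsto s_-$ (justified since the local solution is classical), and inserting the equation \eqref{eqn:M1p}, one obtains
$$\frac{1}{2}\frac{d}{dt}\int_\Omega (u_i^-)^2\,dx=-\int_\Omega u_i^-\,\partial_t u_i\,dx=-D_i\int_\Omega u_i^-\,\Delta u_i\,dx-\int_\Omega u_i^-\,f_i([u]_+)\,dx.$$

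For the diffusion term I would integrate by parts. The interior contribution becomes $D_i\int_\Omega\nabla u_i^-\cdot\nabla u_i\,dx=-D_i\int_\Omega|\nabla u_i^-|^2\,dx\le0$, since $\nabla u_i^-=-\nabla u_i$ on $\{u_i<0\}$ and vanishes a.e.\ elsewhere. The boundary contribution, using the flux condition \eqref{eqn:M2p} in the form $\nabla u_i\cdot\vec{n}=-b_i/D_i$, equals $\int_{\partial\Omega}u_i^-\,b_i\,dS\le0$ by \textbf{(A2.)} together with $u_i^-\ge0$. For the reaction term I would invoke quasi-positivity: on the set $\{u_i<0\}$ the $i$-th component of $[u]_+$ is zero while all components are nonnegative, so $f_i([u]_+)\ge0$ there; since $u_i^-$ is supported on exactly this set, $-\int_\Omega u_i^-\,f_i([u]_+)\,dx\le0$.

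Combining the three estimates yields $\frac{d}{dt}\int_\Omega(u_i^-)^2\,dx\le0$. Because \textbf{(A1.)} gives $u_i^0\ge0$, we have $u_i^-(0,\cdot)=0$, and hence $u_i^-\equiv0$, i.e.\ $u_i\ge0$ for every $i$. Once nonnegativity is known, $[u]_+=u$, so $f_i([u]_+)=f_i(u)$ and the solution of $(\mathcal{P_+})$ is in fact a nonnegative solution of $(\mathcal{P})$, as claimed. The only genuinely system-specific ingredient is the quasi-positivity verification; everything downstream is a routine $L^2$ energy estimate, so I expect the sign bookkeeping in that structural step to be the main point requiring care.
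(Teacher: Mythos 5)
Your proof is correct and follows essentially the same route as the paper: test the equation for $(\mathcal{P_+})$ against the negative part $[u_i]_-$, use \textbf{(A1.)}--\textbf{(A2.)} to dispose of the initial and boundary contributions, and exploit the mass-action structure to show the reaction term has the right sign. The only difference is packaging — you isolate quasi-positivity of $f_i$ as a standalone pointwise lemma, whereas the paper obtains the same cancellation inline from $[u_i]_+[u_i]_-=0$.
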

     \begin{proof}
     	We multiply the equation \eqref{eqn:M1p} by $-\chi_{(0,t)}[u_i]_-$ and integrating over $S\times\Omega$ obtain
     	\begin{align}
     		\frac{1}{2}\|[u_i(t)]_-\|^2_{2, \Omega}&+D_i\|\nabla [u_i]_{-}\|^2_{2, S\times\Omega}=\frac{1}{2}\|[u_i^0]_-\|^2_{2, \Omega}\nonumber+\int_{0}^{t}\int_{\partial\Omega}b_i[u_i]_-d\sigma_xdt\\
     		&+\sum_{j=1}^{J}(\nu_{ij}-\mu_{ij})\int_{0}^{t}\int_{\Omega} (R_j^f([u]_+)-R_j^b([u]_+))(-[u_i]_-)dxdt.
     	\end{align}
     Relying on $({\bf A1.})$ we can see that, $[u_i^0]_-=0$. By $({\bf A2.})$, we can see that $\int_{0}^{t}\int_{\partial\Omega}b_i[u_i]_-d\sigma_xdt\le0$.  If $\nu_{ij}>0$ then $R_j^b([u]_+)$ contains a non-trivial factor and $[u]_+[u]_-=0$. Similarly, If $\mu_{ij}>0$ then $R_j^f([u]_+)$ contains a non-trivial factor and $[u]_+[u]_-=0$. We finally deduce that
     \begin{align*}
     	\frac{1}{2}\|[u_i(t)]_-\|^2_{2,\Omega}+D_i\|\nabla [u_i]_{-}\|^2_{2,S\times\Omega}\le \int_{0}^{T}\int_{\Omega} \sum_{j=1}^{J}(\nu_{ij} R_j^f([u]_{+})+\mu_{ij}R_j^b([u]_+)))(-[u_i]_-)dxdt\le 0.
     \end{align*}
   This gives $[u_i(t)]_-=0$. Therefore $u_i(t)\ge 0$.
     \end{proof}
    \subsection{Finding the a-priori estimates}
    \begin{lemma}\label{lemma:3}
    	Let us set 
    	\begin{align}\label{eqn:dc}
    		 D=\frac{D_{\max}}{2}, \text{ where }D_{\max}=\max_{i=1, 2, . . ., I} D_i.
    	\end{align}
    	 Now let $p\in(1,\infty)$ and $\theta\in L^p((\tau, T)\times\Omega)$ and $\psi$ be the solution of 
    	\begin{align}
    		\begin{split}\label{eqn:pe}
    			\frac{\partial\psi}{\partial t}+D\Delta\psi&=-\theta\text{ in }(\tau,T)\times\Omega,\\
    			\nabla\psi.\vec{n}&=0\text{ on } (\tau,T)\times\partial\Omega,\\
    			\psi(T,x)&=0 \text{ in } \Omega.
    		\end{split}
    	\end{align}
      Then 
      \begin{itemize}
      	\item[$(i)$] $\|\Delta\psi\|_{p,(\tau,T)\times\Omega}\le \frac{C_{mr}(p)}{D}\|\theta\|_{p,(\tau, T)\times\Omega}.$
      	\item[$(ii)$] $\left\lVert\frac{\partial \psi}{\partial t}\right\rVert_{p,(\tau,T)\times\Omega}\le (C_{mr}(p)+1)\|\theta\|_{p, (\tau,T)\times\Omega}$.
      \end{itemize}
     Moreover, there exists a constant $C_{T-\tau, p}$ depending on $p$ and $(T-\tau)$ but independent of $D$ such that
     \begin{align}
     	\|\psi\|^{(2,1)}_{p,(\tau,T)\times\Omega}&\le
     	\begin{cases}
     		 C_{T-\tau,p}\|\theta\|_{p, (\tau, T)\times\Omega}\text{ when } 0<D<1\\
     		 C_{T-\tau,p}\|\theta\|_{p, (\tau, T)\times\Omega}(1+C_{mr}(p))\text{ when } D>1.
     	\end{cases}
     \end{align}
    
    \end{lemma}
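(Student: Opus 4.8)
The plan is to deduce all three estimates from the unit-diffusion maximal regularity of Lemma~\ref{lemma:1} by a parabolic rescaling in time. First I would introduce $v(s,x):=\psi(s/D,x)$ on the stretched cylinder $(D\tau,DT)\times\Omega$. A chain-rule computation converts \eqref{eqn:pe} into $\partial_s v+\Delta v=-g$ with $g(s,x):=D^{-1}\theta(s/D,x)$, the homogeneous Neumann condition being preserved, and terminal value $v(DT,\cdot)=0$; that is, $v$ is exactly the weak solution of the unit-diffusion backward problem \eqref{eqn:MR1} on $(D\tau,DT)$. Because the maximal-regularity bounds \eqref{eqn:MR2}--\eqref{eqn:MR3} are linear in the source, I would first discard the normalization $\|\theta\|_p=1$ and the sign hypothesis of Lemma~\ref{lemma:1} by homogeneity, so that \eqref{eqn:MR3} reads $\|\Delta v\|_{p,(D\tau,DT)\times\Omega}\le C_{mr}(p)\,\|g\|_{p,(D\tau,DT)\times\Omega}$ with a constant that does not see the interval.

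For part $(i)$ I would then change variables back via $s=Dt$. The substitution produces the two bookkeeping factors $\|\Delta v\|_{p,(D\tau,DT)\times\Omega}=D^{1/p}\|\Delta\psi\|_{p,(\tau,T)\times\Omega}$ and $\|g\|_{p,(D\tau,DT)\times\Omega}=D^{1/p-1}\|\theta\|_{p,(\tau,T)\times\Omega}$; inserting these into the rescaled \eqref{eqn:MR3} and cancelling the common $D^{1/p}$ leaves precisely $\|\Delta\psi\|_{p,(\tau,T)\times\Omega}\le C_{mr}(p)D^{-1}\|\theta\|_{p,(\tau,T)\times\Omega}$. Part $(ii)$ needs no rescaling: reading the time derivative off the equation as $\partial_t\psi=-\theta-D\Delta\psi$ and using the triangle inequality together with $(i)$ gives $\|\partial_t\psi\|_p\le\|\theta\|_p+D\|\Delta\psi\|_p\le(1+C_{mr}(p))\|\theta\|_p$, in which the factor $D$ cancels the $D^{-1}$ of $(i)$, leaving a bound independent of $D$.

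For the $W^{2,1}_p$-estimate I would assemble the four constituents of the norm separately. The time-derivative piece is already $(ii)$. For the zeroth-order piece I would integrate $\partial_t\psi=-\theta-D\Delta\psi$ backward from $\psi(T,\cdot)=0$, obtaining $\psi(t)=\int_t^T(\theta+D\Delta\psi)(s)\,ds$, and estimate it by Minkowski's and H\"older's inequalities together with $D\|\Delta\psi\|_p\le C_{mr}(p)\|\theta\|_p$ (the reformulation of $(i)$); this yields $\|\psi\|_p\le (T-\tau)(1+C_{mr}(p))\|\theta\|_p$, clean and $D$-independent. The first- and second-order spatial pieces I would recover from the Laplacian by Calder\'on--Zygmund/elliptic regularity for the Neumann Laplacian, $\|\nabla\psi(t)\|_{p,\Omega}+\|D^2\psi(t)\|_{p,\Omega}\le C_\Omega(\|\Delta\psi(t)\|_{p,\Omega}+\|\psi(t)\|_{p,\Omega})$, integrated in time and combined with $(i)$ and the bound on $\|\psi\|_p$.

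Collecting these, the only constituent carrying genuine $D$-dependence is the second-order spatial one, controlled through $\|\Delta\psi\|_p\le C_{mr}(p)D^{-1}\|\theta\|_p$, and this is where the case split of the statement originates. For $D>1$ one has $D^{-1}<1$, so $C_{mr}(p)D^{-1}<C_{mr}(p)$ and every term is absorbed into $C_{T-\tau,p}(1+C_{mr}(p))\|\theta\|_p$, which is exactly the claimed bound. The regime $0<D<1$ is the genuinely delicate one: there the Hessian inherits the $D^{-1}$ growth of $(i)$, and matching it against a constant that is to depend only on $p$ and $T-\tau$ forces one to exploit the shortened rescaled interval $(D\tau,DT)$ through the dependence of the maximal-regularity constant on the interval length. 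I expect this reconciliation of the $D^{-1}$ scaling of the spatial second derivatives with a $D$-independent prefactor to be the main obstacle; by contrast the two central estimates $(i)$ and $(ii)$ are clean precisely because they rest entirely on the interval-independent constant $C_{mr}(p)$ of \eqref{eqn:MR3}.
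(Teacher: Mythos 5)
Your arguments for $(i)$ and $(ii)$ coincide with the paper's: the substitution $z(t,x)=\psi(t/D,x)$, the application of Lemma \ref{lemma:1} to the resulting unit-diffusion problem on $(D\tau,DT)\times\Omega$, the change of variables back (which produces exactly the factors $D^{1/p}$ and $D^{1/p-1}$ you record), and the identity $\partial_t\psi=-\theta-D\Delta\psi$ combined with $(i)$. Where you diverge is the $W^{2,1}_p$ estimate. The paper does not assemble that norm piece by piece through elliptic regularity; it rewrites the equation as
\begin{align*}
\frac{\partial\psi}{\partial t}+\Delta\psi=-\bigl[\theta+(D-1)\Delta\psi\bigr],
\end{align*}
applies the unit-diffusion maximal regularity bound \eqref{eqn:MR2} on the \emph{original} cylinder to get $\|\psi\|^{(2,1)}_{p,(\tau,T)\times\Omega}\le C_{T-\tau,p}\|\theta+(D-1)\Delta\psi\|_{p,(\tau,T)\times\Omega}$, and then absorbs the extra term using part $(i)$. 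This is the idea missing from your write-up: it delivers the full parabolic norm in one stroke and, for $D>1$, yields the prefactor $1+(1-\tfrac1D)C_{mr}(p)\le 1+C_{mr}(p)$ directly. Your alternative (time derivative from $(ii)$, zeroth-order term by backward integration, spatial derivatives recovered from $\|\Delta\psi\|_p$ via Calder\'on--Zygmund) does reach the same conclusion when $D>1$, at the cost of an extra elliptic-regularity input that the paper avoids.

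The obstacle you flag for $0<D<1$ is, however, a genuine gap in your proposal --- and your suspicion that it cannot be removed by these scaling arguments is well founded, because the paper's own resolution of that branch is unsound. The triangle inequality gives $\|(D-1)\Delta\psi\|_p\le|D-1|\,\frac{C_{mr}(p)}{D}\|\theta\|_p=\bigl|1-\tfrac1D\bigr|\,C_{mr}(p)\|\theta\|_p$, and for $0<D<1$ the factor $\bigl|1-\tfrac1D\bigr|=\tfrac1D-1$ blows up as $D\to0^+$; the paper arrives at a $D$-independent constant only by writing the prefactor as $1+(1-\tfrac1D)C_{mr}(p)$ without the absolute value and then discarding what is formally a negative contribution, which the triangle inequality does not permit. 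So neither your route nor the paper's establishes the $0<D<1$ branch with a constant independent of $D$. (In the only place the lemma is invoked, $D=D_{\max}/2$ is a fixed constant determined by the data, so that branch could be restated with a $D$-dependent constant without damaging the rest of the paper; but as the lemma is phrased, the small-$D$ case remains unproved by both arguments.)
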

   \begin{proof}
   	Let $z(t,x)=\psi(\frac{t}{D}, x)$ then $z$ satisfies
   	\begin{align*}
   		\frac{\partial z}{\partial t}+\Delta z&=-\frac{\tilde{\theta}}{D}
   		\text{ in }(D\tau, DT)\times\Omega,\\
   		\nabla\psi.\vec{n}&=0\text{ on } (D\tau, DT)\times\partial\Omega,\\
   		\psi(DT,x)&=0 \text{ in } \Omega,
   	\end{align*}
    where $\tilde{\theta}(t,x)=\theta(\frac{t}{D}, x)$. Then by Lemma \ref{lemma:1} we have
    \begin{align*}
    	\|\Delta z\|_{p, (D\tau, DT)\times\Omega}\le C_{mr}(p)\left\lVert\frac{\tilde{\theta}}{D}\right\rVert_{p,(D\tau, DT)\times\Omega}.
    \end{align*}
  We substitute $s=\frac{t}{D}$ and calculate to get
  \begin{align*}
  	\int_{\tau}^{T}\int_{\Omega}|\Delta\psi|^pdxds&\le\left(\frac{C_{mr}(p)}{D}\right)^p\int_{\tau}^{T}\int_{\Omega}|\theta|^pdxds\\
  	\implies \|\Delta\psi\|_{p,(\tau,T)\times\Omega}&\le\frac{C_{mr}(p)}{D}\|\theta\|_{p,(\tau,T)\times\Omega}.
  \end{align*}
  We can obtain
  \begin{align*}
  	\left\lVert\frac{\partial \psi}{\partial t}\right\rVert_{p,(\tau,T)\times\Omega}\le\|-\theta-D\Delta\psi\|_{p,(\tau,T)\times\Omega}\le (C_{mr}(p)+1)\|\theta\|_{p,(\tau,T)\times\Omega}.
  \end{align*}
  Finally, we can see that
  \begin{align*}
  	\frac{\partial\psi}{\partial t}+\Delta\psi=-\theta-D\Delta\psi+\Delta\psi=-[\theta+(D-1)\Delta\psi].
  \end{align*}
  Then Lemma \ref{lemma:1} implies that
  \begin{align*}
  	\|\psi\|^{(2, 1)}_{p,(\tau,T)\times\Omega} &\le C_{T-\tau, p}\|\theta+(D-1)\Delta\psi\|_{p,(\tau,T)\times\Omega}\\
  	&\le C_{T-\tau, p}[1+\left(1-\frac{1}{D}\right)C_{mr}(p)]\|\theta\|_{p,(\tau,T)\times\Omega}\\
  	&\le\begin{cases}
  		C_{T-\tau,p}\|\theta\|_{p, (\tau, T)\times\Omega}\text{ when } 0<D<1\\
  		C_{T-\tau,p}\|\theta\|_{p, (\tau, T)\times\Omega}(1+C_{mr}(p))\text{ when } D>1.
  	\end{cases}.
  \end{align*}
   \end{proof}
  \begin{prop}\label{prop:1}
  	Suppose the assumptions $\bf (A1.)-\bf (A4.)$ hold. Set 
  	\begin{align}
  		D_{\max}=\max_{i=1, 2, . . ., I} D_i \text{ and } D=\frac{D_{\max}}{2}.
  	  	\end{align}
    If 
    \begin{align}
    	C_{mr}(p^{'})<1 \text{ for some } p>\frac{(\lambda-1)(n+2)}{2},
    \end{align}
 where $\frac{1}{p}+\frac{1}{p^{'}}=1$ and $\lambda$ is defined as \eqref{eqn:gc}. Then we have
   \begin{align}
   	\|u_i(t)\|_{p, S\times \Omega}\le C(T).
   \end{align}
  \end{prop}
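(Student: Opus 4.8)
The plan is to run the duality method on the summed equation, exploiting the specific choice $D=D_{\max}/2$ from Lemma \ref{lemma:3} to neutralise the mismatch between the different diffusion coefficients. First I would set $U:=\sum_{i=1}^{I}u_i$ and $W:=\sum_{i=1}^{I}D_i u_i$; since each $u_i\ge 0$ by the positivity established above and hence $0\le u_i\le U$, it suffices to bound $\|U\|_{p,S\times\Omega}$. Summing \eqref{eqn:M11} over $i$ gives $\partial_t U=\Delta W+F$ with $F:=\sum_i f_i(u)$, while summing the boundary conditions \eqref{eqn:M12} yields $\nabla W\cdot\vec n=-\sum_i b_i\ge 0$ on $S\times\partial\Omega$ by \textbf{(A2.)}. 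Because $U\ge 0$, I would use the duality characterisation $\|U\|_{p,S\times\Omega}=\sup\{\int_0^T\!\int_\Omega U\theta : 0\le\theta,\ \|\theta\|_{p',S\times\Omega}=1\}$, and test against the solution $\psi$ of the backward problem \eqref{eqn:pe} with $D=D_{\max}/2$, for which Lemma \ref{lemma:3} gives $\psi\ge 0$, $\|\Delta\psi\|_{p'}\le \tfrac{C_{mr}(p')}{D}\|\theta\|_{p'}$ and $\|\partial_t\psi\|_{p'}\le (C_{mr}(p')+1)\|\theta\|_{p'}$.

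Writing $\theta=-(\partial_t\psi+D\Delta\psi)$, substituting $\partial_t U=\Delta W+F$, and integrating by parts in time (using $\psi(T)=0$) and in space (using $\nabla\psi\cdot\vec n=0$), I would obtain the identity
\begin{align*}
\int_0^T\!\!\int_\Omega U\theta=\int_\Omega U(0)\psi(0)+\int_0^T\!\!\int_\Omega\Big(\sum_{i}(D_i-D)u_i\Big)\Delta\psi+\int_0^T\!\!\int_{\partial\Omega}\psi\Big(-\sum_{i}b_i\Big)+\int_0^T\!\!\int_\Omega\psi F.
\end{align*}
The decisive step is the diffusion-mismatch term: since $D=D_{\max}/2$ we have $|D_i-D|\le D$ for every $i$, so with $u_i\ge 0$ and H\"older's inequality $|\int_0^T\!\int_\Omega(\sum_i(D_i-D)u_i)\Delta\psi|\le D\|U\|_p\|\Delta\psi\|_{p'}\le C_{mr}(p')\|U\|_p$, where the factor $D$ cancels exactly against the $1/D$ in Lemma \ref{lemma:3}$(i)$. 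For the reaction term I would invoke \textbf{(A4.)} together with $\psi\ge 0$ to get $\int_0^T\!\int_\Omega\psi F\le C_1\int_0^T\!\int_\Omega\psi U+C_2\int_0^T\!\int_\Omega\psi$; the boundary term is nonnegative and bounded above using the trace theorem and \textbf{(A2.)} ($b_i\in L^p$), while the initial term is controlled by $\|U(0)\|_p\|\psi(0)\|_{p'}$ with $U(0)\in L^\infty(\Omega)$ by \textbf{(A1.)}. Taking the supremum over admissible $\theta$ then gives $\|U\|_p\le C_{mr}(p')\|U\|_p+C_1\|\psi\|_{p'}\|U\|_p+(\text{bounded data})$, and since $C_{mr}(p')<1$ the first term is absorbable.

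The remaining obstacle is the linear term $C_1\|\psi\|_{p'}\|U\|_p$, whose coefficient grows with the length of the interval and cannot be absorbed on all of $[0,T]$ at once. To handle it I would exploit $\psi(T)=0$ to derive the Poincar\'e-type bound $\|\psi\|_{p',(\tau,T)\times\Omega}\le (T-\tau)\|\partial_t\psi\|_{p'}\le (T-\tau)(C_{mr}(p')+1)$, which tends to $0$ as $T-\tau\to 0$. Thus I would partition $[0,T]$ into finitely many subintervals of length $\delta$ so small that $C_1\delta(C_{mr}(p')+1)\le\tfrac12\,(1-C_{mr}(p'))$, run the estimate above on each subinterval with the initial datum of each piece being the terminal value of the previous one (finite, since $u_i\in C(S;L^p(\Omega))$ as a local solution), and iterate; summing the finitely many contributions yields $\|U\|_{p,S\times\Omega}\le C(T)$ and hence $\|u_i\|_{p,S\times\Omega}\le C(T)$.

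The main difficulty of the whole argument is precisely the first estimate of the previous paragraph: rendering the unequal diffusivities harmless through the choice $D=D_{\max}/2$, so that the cross term is dominated by $D\|U\|_p\|\Delta\psi\|_{p'}$ and the diffusion constant cancels against the maximal-regularity bound to leave exactly the absorbable factor $C_{mr}(p')<1$. Everything else is careful but routine bookkeeping of the lower-order data terms. I note finally that the growth exponent $\lambda$ plays no role in this $L^p$ estimate, which rests only on mass control \textbf{(A4.)} and the duality structure; the lower restriction $p>\tfrac{(\lambda-1)(n+2)}{2}$ is what will be needed afterwards when bootstrapping this bound, via the growth control \eqref{eqn:gc} and the embeddings of Lemma \ref{lemma:2}, up to an $L^\infty$ estimate.
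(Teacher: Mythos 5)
Your core argument coincides with the paper's: dualise against the backward problem \eqref{eqn:pe} with $D=D_{\max}/2$, integrate by parts to isolate the mismatch term $\sum_i(D_i-D)u_i\Delta\psi$, bound it by $D\,\|\sum_iu_i\|_{p}\,\|\Delta\psi\|_{p'}\le C_{mr}(p')\|\sum_iu_i\|_{p}$ using Lemma \ref{lemma:3}$(i)$, and absorb it via $C_{mr}(p')<1$; the initial, boundary and mass-control terms are treated exactly as in the paper. Your use of the two-sided bound $|D_i-D|\le D$ together with H\"older is in fact cleaner than the paper's one-sided inequality $(D_i-D)\le D_{\max}/2$ (which is applied to the sign-changing factor $\Delta\psi$), and your observation that $\lambda$ and the restriction $p>\frac{(\lambda-1)(n+2)}{2}$ are not used in this proposition but only in the subsequent bootstrap is accurate.

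The one place where you depart from the paper is the treatment of the term $C_1\int_0^T\!\!\int_\Omega\psi\,\sum_iu_i$, and there your argument has a gap. The paper absorbs this term together with the $C_{mr}(p')$ term, which tacitly requires $C_{mr}(p')<1-C_1C_{T,p'}$ (with $C_{T,p'}$ the constant of Lemma \ref{lemma:3}) rather than merely $C_{mr}(p')<1$; you rightly notice that the coefficient $C_1\|\psi\|_{p'}$ is not small on a long interval and propose to slice $[0,T]$ into pieces of small length $\delta$. But the duality estimate on a slice $(\tau_k,\tau_{k+1})$ only yields a space--time bound on $\|\sum_iu_i\|_{p,(\tau_k,\tau_{k+1})\times\Omega}$, whereas restarting on the next slice requires the pointwise-in-time quantity $\|\sum_iu_i(\tau_{k+1})\|_{p,\Omega}$ as initial datum; the former does not control the latter, and invoking mere finiteness from $u_i\in C(S;L^p(\Omega))$ produces a constant that is not quantified by the previous step and could degenerate as the slices approach $T_{\max}$, so the iteration does not close as written. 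Two standard repairs: run the dual problem with nonzero nonnegative final data $\psi(T)=\psi_T$, so that the same integration by parts also bounds $\int_\Omega(\sum_iu_i)(\tau_{k+1})\psi_T\,dx$ and hence, by duality over $\psi_T$, the endpoint norm $\|\sum_iu_i(\tau_{k+1})\|_{p,\Omega}$; or eliminate the $C_1$ term altogether by the substitution $v_i=e^{-C_1t}u_i$, which reduces \textbf{(A4.)} to $\sum_i\tilde f_i(v)\le C_2$ and makes the single application of duality on all of $[0,T]$ legitimate.
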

 \begin{proof}
 	Let $\theta\in L^{p^{'}}(S\times\Omega)$ such that $\|\theta\|_{p^{'}, S\times\Omega}=1$. Then by Lemma \ref{lemma:3} we have
 	\begin{align*}
 		\int_{0}^{T}\int_{\Omega}u_i\theta dxdt&=\int_{0}^{T}\int_{\Omega} u_i(-\frac{\partial \psi}{\partial t}-D\Delta \psi)dxdt=\int_{\Omega} u_i^0(x)\psi(0,x) dx+\int_{0}^{T}\int_{\Omega} (D_i-D)u_i\Delta\psi dxdt\\
 		&+\int_{0}^{T}\int_{\Omega}f_i(u)\psi dxdt-\int_{0}^{T}\int_{\partial\Omega}b_i\psi d\sigma_xdt.
 	\end{align*}
 	As $(D_i-D)\le D_{\max}-\frac{D_{\max}}{2}=\frac{D_{\max}}{2}$ and taking summation over $i=1, 2, . . ., n$ we see that
 	\begin{align*}
 		\sum_{i=1}^{I}\int_{0}^{T}\int_{\Omega} u_i\theta dxdt&\le \sum_{i=1}^{I}\int_{0}^{T}\int_{\Omega} u_i^0(x)\psi(0,x)dx+\frac{D_{\max}}{2}\int_{0}^{T}\int_{\Omega}\left (\sum_{i=1}^{I}u_i\right)\Delta\psi dxdt\\
 		&+\int_{0}^{T}\int_{\Omega}\left(\sum_{i=1}^{I}f_i(u)\right)\psi dxdt+\int_{0}^{T}\int_{\partial\Omega}\left(\sum_{i=1}^{I}b_i\right)\psi d\sigma_xdt.
 	\end{align*}
 	Using $(\bf A4.)$ we can simplify it as
 	\begin{align}\label{eqn:1}
 		\sum_{i=1}^{I}\int_{0}^{T}\int_{\Omega} u_i\theta dxdt&\le \sum_{i=1}^{I}\int_{0}^{T}\int_{\Omega} u_i^0(x)\psi(0,x)dx+\frac{D_{\max}}{2}\int_{0}^{T}\int_{\Omega}\left(\sum_{i=1}^{I}u_i\right)\Delta\psi dxdt\notag\\
 		&+C_1\int_{0}^{T}\int_{\Omega}\left(\sum_{i=1}^{I}u_i\right)\psi dxdt+C_2\int_{0}^{T}\int_{\Omega}\psi dxdt+\int_{0}^{T}\int_{\partial\Omega}\left(\sum_{i=1}^{I}b_i\right)\psi d\sigma_xdt.
 	\end{align}
 	By the help of Holder's inequality we see that
 	\begin{align*}
 		\sum_{i=1}^{I}\int_{0}^{T}\int_{\Omega} u_i\theta dxdt&\le 	\sum_{i=1}^{I}\|u_i^0(x)\|_{p,\Omega}\left\lVert\psi(0, .)\right\rVert_{p^{'}, \Omega}+\frac{D_{\max}}{2}\left\lVert\sum_{i=1}^{I}u_i\right\rVert_{p, S\times\Omega}\|\Delta\psi\|_{p^{'}, S\times\Omega}\\
 		&+C_1\left\lVert\sum_{i=1}^{I}u_i\right\rVert_{p, S\times\Omega}\|\psi\|_{p^{'}, S\times\Omega}+C_2(T|\Omega|)^{\frac{1}{p}}\|\psi\|_{p^{'}, S\times\Omega}+\left\lVert\sum_{i=1}^{I}b_i\right\rVert_{p, S\times\partial\Omega}\left\lVert\psi\right\rVert_{p^{'}, S\times\partial\Omega}.
 	\end{align*}
 	Now we can calculate
 	\begin{align*}
 		\|\psi(0, .)\|^{p^{'}}_{p^{'}, \Omega}&=\int_{\Omega}\left\lvert\int_{0}^{T}\frac{\partial \psi}{\partial t}dt\right\rvert^{p^{'}}dx \text{ as } \psi(T,x)=0 \\
 		&\le T^{(p^{'}-1)}(C_{mr}(p^{'})+1))^{p^{'}}.
 	\end{align*}
 	Applying Lemma \ref{lemma:3} we obtain
 	\begin{align*}
 		\sum_{i=1}^{I}\int_{0}^{T}\int_{\Omega} &u_i\theta dxdt\le T^{\frac{1}{p}}(1+C_{mr}(p^{'}))\sum_{i=1}^{I}\|u_i^0\|_{p,\Omega}+ C_{mr}(p^{'})\left\lVert\sum_{i=1}^{I}u_i\right\rVert_{p, S\times\Omega}\\
 		&+C_1\left\lVert\sum_{i=1}^{I}u_i\right\rVert_{p, S\times\Omega}\|\psi\|^{(2,1)}_{p^{'},(\tau,T)\times\Omega}+C_2(T\Omega)^{\frac{1}{p}}\|\psi\|^{(2,1)}_{p^{'},(\tau,T)\times\Omega}+\left\lVert\sum_{i=1}^{I}b_i\right\rVert_{p, S\times\partial\Omega}\|\psi\|^{(2,1)}_{p^{'},(\tau,T)\times\partial\Omega}.
 	\end{align*}
 	We then use duality and get
 	\begin{align*}
 		\left\lVert\sum_{i=1}^{I}u_i\right\rVert_{p, S\times\Omega}&\le T^{\frac{1}{p}}(1+C_{mr}(p^{'}))\sum_{i=1}^{I}\|u_i^0\|_{p,\Omega}+C_{mr}(p^{'})\left\lVert\sum_{i=1}^{I}u_i\right\rVert_{p, S\times\Omega}\\
 		&+C_1\left\lVert\sum_{i=1}^{I}u_i\right\rVert_{p, S\times\Omega}\|\psi\|^{(2,1)}_{p^{'},(\tau,T)\times\Omega}+C_2(T\Omega)^{\frac{1}{p}}\|\psi\|^{(2,1)}_{p^{'},(\tau,T)\times\Omega}+\left\lVert\sum_{i=1}^{I}b_i\right\rVert_{p, S\times\partial\Omega}\|\psi\|^{(2,1)}_{p^{'},(\tau,T)\times\partial\Omega}.
 	\end{align*}
 	Now we have to consider the two cases one for $0<D<1$ and another one $D>1$. For the case $0<D<1$ we have
 	\begin{align*}
 		(1-C_{mr}(p^{'})-C_1C_{T-\tau, p^{'}})\left\lVert\sum_{i=1}^{I}u_i\right\rVert_{p, S\times\Omega}\le C
 	\end{align*}
 	and for $D>1$ we have 
 	\begin{align*}
 		(1-C_1&C_{T-\tau,p^{'}}-C_{mr}(p^{'})(1+C_1C_{T-\tau,p^{'}}))\left\lVert\sum_{i=1}^{I}u_i\right\rVert_{p, S\times\Omega}\le C.
 	\end{align*}
 	For the first case the coefficient of $\left\lVert\sum_{i=1}^{I}u_i\right\rVert_{p, S\times\Omega}$ will be positive if $C_{mr}(p^{'})<1-C_1C_{T-\tau,p^{'}}<1$ and for the second case $C_{mr}(p^{'})< \frac{(1-C_1C_{T-\tau,p^{'}})}{1+C_1C_{T-\tau,p^{'}}}<1$. Combining these two cases we can conclude that for $C_{mr}(p^{'})<1$ we have
 	\begin{align}\label{eqn:2}
 		\left\lVert\sum_{i=1}^{I}u_i\right\rVert_{p, S\times\Omega}\le C.
 	\end{align}
 	Non-negativity of $u_i$ gives that
 	\begin{align*}
 		\|u_i\|_{p, S\times\Omega}\le C  \text{ for all } i=1, 2, . . ., I.
 	\end{align*}
  \end{proof}
   \subsection{Proof of Theorem \ref{thm:1} }
   We now can prove our main theorem by employing the Lemmas and the Proposition \ref{prop:1}. We can write from \eqref{eqn:M11} and \eqref{eqn:gc} that
    \begin{align*}
    	\frac{\partial u_i}{\partial t}-D_i\Delta u_i=f_i(u)\in L^{\frac{p}{\lambda}}(S\times\Omega).
    \end{align*}
    Then by Maximal Regularity i.e. by Lemma \ref{lemma:1} we have
    \begin{align*}
    	u_i\in W^{2, 1}_{\frac{p}{\lambda}}(S\times\Omega).
    \end{align*}
    Now by Embedding Inequality Lemma \ref{lemma:2} we can see that if $\frac{p}{\lambda}> \frac{(n+2)}{2}$ then 
    \begin{align*}
    	\|u_i\|_{\infty, S\times\Omega}\le C(\frac{p}{\lambda}, T)\|u_i\|^{(2,1)}_{p, S\times\Omega}.
    \end{align*}
    Otherwise, that is when $\frac{p}{\lambda} \le\frac{(n+2)}{2}$ then
    \begin{align*}
    	u_i\in L^{p_1}(S\times\Omega) \text{ where } p_1=\frac{(n+2)\frac{p}{\lambda}}{n+2-\frac{2p}{\lambda}}.
    \end{align*}
    Therefore, we obtain
    \begin{align*}
    	\frac{\partial u_i}{\partial t}-D_i\Delta u_i=f_i(u)\in L^{\frac{p_1}{\lambda}}(S\times\Omega).
    \end{align*}
    We repeat the same argument and deduce that
    \begin{align*}
    	u_i\in L^{p_2}(S\times\Omega) \text{ where } p_2=\frac{(n+2)\frac{p_1}{\lambda}}{n+2-\frac{2p_1}{\lambda}}.
    \end{align*}
    So basically we are having a recursive sequence $\{p_k\}$ such that $u_i\in  L^{p_{k+1}}(S\times\Omega) $ and 
    \begin{align*}
    	p_{k+1}=\frac{(n+2)\frac{p_k}{\lambda}}{n+2-\frac{2p_k}{\lambda}}.
    \end{align*} 
    Now as $p>\frac{(\lambda-1)(n+2)}{2}$, we have
    \begin{align*}
    	p_{k+1}>\left(\frac{n+2}{\lambda(n+2)-2p_0}\right)^kp_0.
    \end{align*}
    Therefore as $k\rightarrow\infty$, $p_k\rightarrow\infty$ that means there exists some $k_0$ for which
    \begin{align*}
    	\frac{p_{k_0}}{\lambda}>\frac{(n+2)}{2}
    \end{align*}
    and hence we finally get
    \begin{align*}
    	\frac{\partial u_i}{\partial t}-D_i\Delta u_i=f_i(u)\in L^{\frac{p_{k_0}}{\lambda}}(S\times\Omega) \text{ and } u_i\in L^\infty(S\times\Omega).
    \end{align*}
    \textbf{Uniqueness}\\
    Let $u^{1}=(u_1^{(1)}, u_2^{(1)}, . . ., u_I^{(1)}), u^{2}=(u_1^{(2)}, u_2^{(2)}, . . ., u_I^{(2)})\in (W_p^{(2,1)})^I$ be two solutions of the problem $(\mathcal{P})$, where $u_i^{(1)}\neq u_i^{(2)}$ for each $i=1, 2, . . ., I$. We denote $\tilde{u}=(\tilde{u}_1,  \tilde{u}_2, . . ., \tilde{u}_I)$ where $(\tilde{u}_i)_{1\le i\le I}=(u_i^{(1)}-u_i^{(2)})_{1\le i\le I}$ then we write the system of equations $\eqref{eqn:M11}-\eqref{eqn:M13}$ for $u_i^{(1)}$ and $u_i^{(2)}$ and take the difference and obtain
    \begin{subequations}
    	\begin{align}
    		\frac{\partial \tilde{u}_i}{\partial {t}} + \nabla.(-{D_i}\nabla \tilde{u}_i)&=f_i(u^1)-f_i(u^2) \; \text{ in } S\times \Omega,	\label{eqn:M1d}\\
    		-{D_i}\nabla \tilde{u}_i.\vec{n}&=0 \; \text{ on } S \times \partial\Omega, \label{eqn:M2d}\\
    		\tilde{u}_i(0,x)&=0 \; \text{ in } \Omega.\label{eqn:M3d} 
    	\end{align}	
    \end{subequations}
    We then test the above equation with $\chi_{(0,t)}\tilde{u}_i$ and deduce
    \begin{align*}
    	\frac{1}{2}\|\tilde{u}_i(t)\|^2_{2,\Omega}+D_i\|\nabla\tilde{u}_i\|^2_{2,S\times\Omega}=\int_{0}^{t}\int_{\Omega}(f_i(u^1)-f_i(u^2))\tilde{u}_i dxdt.
    \end{align*}
    We have to estimate the reaction rate term in the r.h.s. To do so, we first expand the difference $(f_i(u^1)-f_i(u^2))$. Then by Lemma \ref{lemma:ur}, we see that each term contains a factor of the type $(u_i^1-u_i^2)$, which can be estimated by mean value theorem as all other factors are bounded in $L^\infty(S\times\Omega)$. We simplify and finally get
    \begin{align*}
    	\|\tilde{u}_i(t)\|^2_{2,\Omega}\le C\sum_{i=1}^{I}\int_{0}^{t}\|\tilde{u}_i(\tau)\|^2_{2,\Omega}d\tau.
    \end{align*}
    Adding up over $i=1, 2, . . . ,I$ and the application of the Gronwall's inequality yields
    \begin{align*}
    	\|\tilde{u}_i(t)\|^2_{2,\Omega}=0
    	\implies u_i^{(1)}(t)=u_i^{(2)}(t)  \text{ for a.e. } t\in S \text{ and for each } i=1, 2, . . ., I.
    \end{align*}
   \textbf{Uniform in time bounds}\\
  We establish the existence and uniqueness of the global classical solution for the system $\eqref{eqn:M11}-\eqref{eqn:M13}$. Next we wish to prove that the $L^\infty$ norm of the solution is bounded uniformly in time. We define an increasing smooth function $\zeta : \mathbb{R}\rightarrow [0,1]$ such that
  \begin{align*}
  	\zeta(t)=\begin{cases}
  		0 \text{ if } t\le0\\ 
  		1 \text{ if } t\ge 1
  	\end{cases}
  \text{ and } |\zeta'(t)|\le 2 \text{ for all }t\in\mathbb{R}.
  \end{align*}
  For any $\tau\ge0$, we specify the shifted function as $\zeta_\tau(t)=\zeta(t-\tau)$. We then multiply the equation \eqref{eqn:M11} by $\zeta_\tau$ and have the equation
  \begin{align}\label{eqn:uz}
  	\frac{\partial (u_i\zeta_\tau)}{\partial t}-D_i\Delta (u_i\zeta_\tau)=f_i(u)\zeta_\tau+\zeta_\tau^{'} u_i, \text{ for all } \tau\ge0.
  \end{align}
  We choose $\theta\in L^{p^{'}}((\tau, \tau+2)\times\Omega)$ such that $\|\theta\|_{p^{'}, (\tau, \tau+2)\times\Omega}=1$ and $\psi$ be a solution of the system \eqref{eqn:pe} for $T=\tau+2$. We now proceed like \eqref{eqn:1} and deduce that
  \begin{align}
  	&\sum_{i=1}^{I}\int_{\tau}^{\tau+2}\int_{\Omega} u_i\zeta_\tau\theta dxdt\le C_1 \int_{\tau}^{\tau+2}\int_{\Omega} (\sum_{i=1}^{I} u_i\zeta_\tau)\psi dxdt+C_2 \int_{\tau}^{\tau+2}\int_{\Omega}\zeta_\tau\psi dxdt\notag\\
  	&+\sum_{i=1}^{I}\int_{\tau}^{\tau+2}\int_{\Omega} \zeta_\tau^{'}u_i\psi dxdt+\sum_{i=1}^{I}\int_{\tau}^{\tau+2}\int_{\partial\Omega} b_i\zeta_\tau\psi d\sigma_xdt+\frac{D_{\max}}{2}\sum_{i=1}^{I}\int_{\tau}^{\tau+2}\int_{\Omega} u_i\zeta_\tau\Delta\psi dxdt.
  \end{align}
  We now apply Holder's inequality and the fact $|\zeta_\tau'|\le 2$ to get
  \begin{align}\label{eqn:upb}
  	\sum_{i=1}^{I}\int_{\tau}^{\tau+2}\int_{\Omega} u_i\zeta_\tau\theta dxdt&\le C_1\left\Vert \sum_{i=1}^{I} u_i\zeta_\tau\right\rVert_{p, (\tau, \tau+2)\times\Omega}\|\psi\|_{p^{'}, (\tau, \tau+2)\times\Omega}+C_2(2|\Omega|)^{\frac{1}{p}}\|\psi\|_{p^{'}, (\tau, \tau+2)\times\Omega}\notag\\
  	&+2\left\Vert \sum_{i=1}^{I} u_i\right\rVert_{p, (\tau, \tau+2)\times\Omega}\|\psi\|_{p^{'}, (\tau, \tau+2)\times\Omega}+\left\Vert \sum_{i=1}^{I} b_i\right\rVert_{p, (\tau, \tau+2)\times\partial\Omega}\|\psi\|_{p^{'}, (\tau, \tau+2)\times\partial\Omega}\notag\\
  	&+\frac{D_{\max}}{2}\left\Vert \sum_{i=1}^{I} u_i\zeta_\tau\right\rVert_{p, (\tau, \tau+2)\times\Omega}\|\Delta\psi\|_{p^{'}, (\tau, \tau+2)\times\Omega}.
  \end{align}
  Now Lemma \eqref{lemma:3} implies
  \begin{align}\label{eqn:pb}
  	\|\psi\|_{p^{'}, (\tau, \tau+2)\times\Omega} \le \|\psi\|^{(2, 1)}_{p^{'}, (\tau, \tau+2)\times\Omega}\le C_{2, p^{'}}(1+C_{mr}(p^{'}))
  \end{align}
 and 
 \begin{align}\label{eqn:nb}
 	\|\Delta \psi\|_{p^{'}, (\tau, \tau+2)\times\Omega} \le \frac{C_{mr}(p^{'})}{D}=\frac{2 C_{mr}(p^{'}) }{D_{\max}}.
 \end{align}
 We then insert \eqref{eqn:pb} and \eqref{eqn:nb} in \eqref{eqn:upb} and derive
 \begin{align}
 	&\sum_{i=1}^{I}\int_{\tau}^{\tau+2}\int_{\Omega} u_i\zeta_\tau\theta dxdt\le C_1C_{2, p^{'}}(1+C_{mr}(p^{'}))\left\Vert \sum_{i=1}^{I} u_i\zeta_\tau\right\rVert_{p, (\tau, \tau+2)\times\Omega}+C_2(2|\Omega|)^{\frac{1}{p}}C_{2, p^{'}}(1+C_{mr}(p^{'}))\notag\\
 	&+2C_{2, p^{'}}(1+C_{mr}(p^{'}))\left\Vert \sum_{i=1}^{I} u_i\right\rVert_{p, (\tau, \tau+2)\times\Omega}+C C_{2, p^{'}}(1+C_{mr}(p^{'}))+C_{mr}(p^{'})\left\Vert \sum_{i=1}^{I} u_i\zeta_\tau\right\rVert_{p, (\tau, \tau+2)\times\Omega}.
 \end{align}
 We now use duality to obtain
 \begin{align*}
 	(1-C_1C_{2, p^{'}}(1+C_{mr}(p^{'}))-C_{mr}(p^{'}))\left\Vert \sum_{i=1}^{I} u_i\zeta_\tau\right\rVert_{p, (\tau, \tau+2)\times\Omega}\le C_3+C_4 \left\Vert \sum_{i=1}^{I} u_i\right\rVert_{p, (\tau, \tau+2)\times\Omega},
 \end{align*}
 where the constants $C_3$ and $C_4$ are depends on $p$ but independent of the time $T$. Now for $(1-C_1C_{2, p^{'}}(1+C_{mr}(p^{'}))-C_{mr}(p^{'}))>0$, that is $C_{mr}(p^{'})<1$, we have
 \begin{align}
 	\left\Vert \sum_{i=1}^{I} u_i\zeta_\tau\right\rVert_{p, (\tau, \tau+2)\times\Omega}\le C_5+C_6 \left\Vert \sum_{i=1}^{I} u_i\right\rVert_{p, (\tau, \tau+2)\times\Omega}.
 \end{align}
 As $\zeta_\tau\ge 0$ and $\zeta_\tau|_{(\tau+1, \tau+2)}=1$, we are led to
 \begin{align}\label{eqn:3}
 	\left\Vert \sum_{i=1}^{I} u_i\right\rVert_{p, (\tau+1, \tau+2)\times\Omega}\le C_5+C_6 \left\Vert \sum_{i=1}^{I} u_i\right\rVert_{p, (\tau, \tau+2)\times\Omega}.
 \end{align}
  Let $\Im=\left\{\tau\in\mathbb{N} : \left\Vert \sum_{i=1}^{I} u_i\right\rVert_{p, (\tau-1, \tau)\times\Omega}\le \left\Vert \sum_{i=1}^{I} u_i\right\rVert_{p, (\tau, \tau+1)\times\Omega}\right\}$. Then for $\tau+1 \in \Im$, we can rewrite the equation \eqref{eqn:3} as
  \begin{align}
  	\left\Vert \sum_{i=1}^{I} u_i\right\rVert_{p, (\tau+1, \tau+2)\times\Omega}\le C_5+C_6 \left\Vert \sum_{i=1}^{I} u_i\right\rVert_{p, (\tau+1, \tau+2)\times\Omega}.
  \end{align}
  Therefore for $C_6<1$ we obtain
  \begin{align}\label{eqn:4}
  	\left\Vert \sum_{i=1}^{I} u_i\right\rVert_{p, (\tau+1, \tau+2)\times\Omega}\le C \text{ for all } \tau+1\in\Im.
  \end{align}
 Here the constant $C$ is independent of $\tau$. Now if follows from the definition of $\Im$ and the inequality \eqref{eqn:4} that
 \begin{align}\label{eqn:5}
 	\left\Vert \sum_{i=1}^{I} u_i\right\rVert_{p, (\tau+1, \tau+2)\times\Omega}\le C,
 \end{align}
 holds true for all $\tau\in\Im$ and for some constant $C>0$ independent of $\tau$. We now use a bootstrap argument to make the constants independent of time. For any $L\in \mathbb{N}$, we can write from \eqref{eqn:5} that
 \begin{align}\label{eqn:6}
 	\sup_{\tau\in\mathbb{N}}\left\lVert \sum_{i=1}^{I}u_i\right\rVert_{p, (\tau, \tau+L)\times\Omega}\le C(L).
 \end{align}
  Using \eqref{eqn:6} and \eqref{eqn:gc}, we get the estimate
  \begin{align}
  	\|u_i\|_{\frac{p}{\lambda}, (\tau, \tau+L)\times\Omega}\le C(L, \lambda)\|u_i\|_{p, (\tau, \tau+L)\times\Omega}\le C(L)
  \end{align}
  and
  \begin{align}
  	\|f_i(u)\|_{\frac{p}{\lambda}, (\tau, \tau+L)\times\Omega}\le C(L)\|u\|^\lambda_{\frac{p}{\lambda}, (\tau, \tau+L)\times\Omega}\le C(L).
  \end{align}
 Then application of the Lemma \ref{lemma:1} to the equation \eqref{eqn:uz} yields
  \begin{align}\label{eqn:uz1}
  	\|u_i\zeta_\tau\|^{(2,1)}_{\frac{p}{\lambda}, (\tau, \tau+L)\times\Omega}\le C(L)\|\zeta_\tau f_i(u)+\zeta_\tau'u_i\|_{\frac{p}{\lambda}, (\tau, \tau+L)\times\Omega}\le C(L).
  \end{align}
  Using embedding inequality of Lemma \ref{lemma:2}, we have
  \begin{align*}
  	\|u_i\zeta_\tau\|_{q, (\tau, \tau+L)\times\Omega}\le C(L)\|u_i\zeta_\tau\|^{(2,1)}_{\frac{p}{\lambda}, (\tau, \tau+L)\times\Omega}\le C(L), \text{ for all } q<p_1=\frac{(n+2)\frac{p}{\lambda}}{n+2-2\frac{p}{\lambda}}.
  \end{align*}
  That implies 
  \begin{align*}
  	\|u_i\|_{q, (\tau+1, \tau+L)\times\Omega}\le C(L)\text{ for all } q<p_1.
  \end{align*}
  Therefore, as long as $\frac{p_k}{\lambda}<\frac{n+2}{2}$ we get
  \begin{align*}
  		\|u_i\|_{q, (\tau+k, \tau+L)\times\Omega}\le C(L)\text{ for all } q<p_k.
  \end{align*}
 In this way, we have a recursive sequence $\{p_k\}$ where $p_{k+1}=\frac{(n+2)\frac{p_k}{\lambda}}{n+2-2\frac{p_k}{\lambda}}.$ Now since $p>\frac{(\lambda-1)(n+2)}{2}$, we can proceed similar to the proof of the existence of the classical solution and derive that $p_k\rightarrow \infty$ as $k\rightarrow\infty$. Hence there exists $k_0\in\mathbb{N}$ such that $\frac{p_{k_0}}{\lambda}>\frac{n+2}{2}$. Application of the maximal regularity result Lemma \ref{lemma:1} to
 \begin{align}
 	\frac{\partial (\zeta_{\tau+k_0}u_i)}{\partial t}-D_i\Delta (\zeta_{\tau+k_0}u_i)=\zeta^{'}_{\tau+k_0}u_i+\zeta_{\tau+k_0} f_i(u)
 \end{align} 
 yields
 \begin{align*}
 	\|\zeta_{\tau+k_0} u_i\|_{\infty, (\tau+k_0, \tau+L)}&\le C(L)	\|\zeta_{\tau+k_0} u_i\|^{(2, 1)}_{\frac{p_{k_0}}{\lambda}, (\tau+k_0, \tau+L)\times\Omega}\\
   &\le C(L)\|\zeta^{'}_{\tau+k_0}u_i+\zeta_{\tau+k_0} f_i(u)\|_{\frac{p_{k_0}}{\lambda}, (\tau+k_0, \tau+L)\times\Omega}
   \le C(L), \text{ by }\eqref{eqn:uz1}.
 \end{align*}
 This gives
 \begin{align*}
 	\|u_i\|_{\infty, (\tau+k_0+1, \tau+L)\times\Omega}\le C(L) \text{ for all }\tau\ge 0.
 \end{align*}
 Here the constant $C(L)$ is independent of $\tau$. Choosing $L=k_0+2$, we get
 \begin{align*}
 	\|u_i\|_{\infty, (j, j+1)\times\Omega}\le C(L).
 \end{align*}
 Consequently,
 \begin{align*}
 	\sup_{t\ge k_0+1}\|u_i(t)\|_{\infty, \Omega}\le C(L), \text{ for all } i=1, 2, . . .,I.
 \end{align*}
 Thus the unique global classical solution of the system $\eqref{eqn:M11}-\eqref{eqn:M13}$ is bounded uniformly in time.
 \section{Conclusion}
  We investigated a diffusion-reaction system and established the existence of global in time unique positive classical solution for the case of different diffusion coefficients. We also proved that the solution is bounded uniformly in time. This will help us to study the large-time behavior of the system. We didn't make any assumption about the conservation of the number of atoms or that the diffusion coefficients are close to each other therefore the result is applicable to a large class of reaction-diffusion systems. In our future works, we wish to study the case where the diffusion coefficients are required to be measurable and uniformly bounded and other possibilities of the diffusion coefficients along with nonlinear reaction rate terms.
  \section*{Data availability}
  Data sharing not applicable to this article as no datasets were generated or analyzed during the current study.
  \section*{Conflicts of interest} The authors declare no conflict of interest.
	\bibliographystyle{acm}
	\bibliography{gdre}
\end{document}